\newtheorem{algorithm}{Algorithm}
\newcommand{\Real}{\ensuremath{\mathbb{R}}}
\def\spose#1{\hbox to 0pt{#1\hss}}
\def\text #1{\hbox{\quad#1\quad}}
\def\nthinsp{\mskip -2   mu}
\def\superstar{^{\raise 0.5pt\hbox{$\nthinsp *$}}}
\def\SUPERSTAR{^{\raise 0.5pt\hbox{$*$}}}
\def\lamstarT {\lambda^{\raise 0.5pt\hbox{$\nthinsp *$}T}}
\def\Nscr{{\cal N}}
\def\Nscr{{\cal N}}
\def\hbar{\skew{4.2}\bar h}
\def\wbar{\skew3\bar w}
		\def\bkE{{\rm I\kern-.17em E}}
		\def\bk1{{\rm 1\kern-.17em l}}
		\def\bkD{{\rm I\kern-.17em D}}
		\def\bkR{{\rm I\kern-.17em R}}
		\def\bkP{{\rm I\kern-.17em P}}
		\def\bkY{{\bf \kern-.17em Y}}
		\def\bkZ{{\bf \kern-.17em Z}}
		\def\beq{\begin{eqnarray}}
		\def\bc{\begin{center}}
		\def\be{\begin{enumerate}}
		\def\bi{\begin{itemize}}
		\def\bs{\begin{small}}
		\def\bS{\begin{slide}}
		\def\ec{\end{center}}
		\def\ee{\end{enumerate}}
		\def\ei{\end{itemize}}
		\def\es{\end{small}}
		\def\eS{\end{slide}}
		\def\eeq{\end{eqnarray}}
	\def\cp2problem#1#2#3#4{\fbox
		 {\begin{tabular*}{0.9\textwidth}
			{@{}l@{\extracolsep{\fill}}l@{\extracolsep{6pt}}l@{\extracolsep{\fill}}c@{}}
				#1 & & $#4 $ 
			\end{tabular*}}}
		\renewcommand{\emph}[1]{\textbf{#1}}
		\def\bkE{{\rm I\kern-.17em E}}
		\def\bk1{{\rm 1\kern-.17em l}}
		\def\bkD{{\rm I\kern-.17em D}}
		\def\bkR{\mathbb{R}}
		\def\bkP{{\rm I\kern-.17em P}}
		\def\bkZ{{\bf{Z}}}
\newcommand {\beeq}[1]{\begin{equation}\label{#1}}
\newcommand {\eeeq}{\end{equation}}
\newcommand {\bea}{\begin{eqnarray}}
\newcommand {\eea}{\end{eqnarray}}
\def\texitem#1{\par\smallskip\noindent\hangindent 25pt
               \hbox to 25pt {\hss #1 ~}\ignorespaces}
\newtheoremstyle{wsc}
{3pt}
{3pt}
{}
{}
{\bf}
{}
{.5em}
{}
\theoremstyle{wsc}
\newtheorem{theorem}{Theorem}
\newtheorem{lemma}{Lemma}
\newtheorem{remark}{Remark}
\newtheorem{assumption}{Assumption}
\def\cA{\mathcal A} 
\newcommand{\aj }[1]{{\color{black}#1}}
\newcommand{\us}[1]{{\color{black}#1}}
\newcommand{\uvs}[1]{{\color{black}#1}}
\newcommand{\vvs}[1]{{\color{black}#1}}
\newcommand{\af}[1]{{\color{black}#1}}
\newcommand{\afj}[1]{{\color{black}#1}}
\begin{document}

%
%

\pagestyle{fancyplain}

\thispagestyle{plain}
\firstPageHead{}

\chead{\fancyplain{}{\itshape Jalilzadeh and Shanbhag}}

\rhead{}
\cfoot{}
\renewcommand{\headrulewidth}{0pt} 

\makeatletter
\let\@internalcite\cite
\def\cite{\def\@citeseppen{-1000}%
    \def\@cite##1##2{(##1\if@tempswa , ##2\fi)}%
    \def\citeauthoryear##1##2##3{##1 ##3}\@internalcite}
\def\citeNP{\def\@citeseppen{-1000}%
    \def\@cite##1##2{##1\if@tempswa , ##2\fi}%
    \def\citeauthoryear##1##2##3{##1 ##3}\@internalcite}
\def\citeN{\def\@citeseppen{-1000}%
    \def\@cite##1##2{##1\if@tempswa, ##2)\else{}\fi}%
    \def\citeauthoryear##1##2##3{##1 (##3)}\@citedata}
\def\citeA{\def\@citeseppen{-1000}%
    \def\@cite##1##2{(##1\if@tempswa , ##2\fi)}%
    \def\citeauthoryear##1##2##3{##1}\@internalcite}
\def\citeANP{\def\@citeseppen{-1000}%
    \def\@cite##1##2{##1\if@tempswa , ##2\fi}%
    \def\citeauthoryear##1##2##3{##1}\@internalcite}
\def\shortcite{\def\@citeseppen{-1000}%
    \def\@cite##1##2{(##1\if@tempswa , ##2\fi)}%
    \def\citeauthoryear##1##2##3{##2 ##3}\@internalcite}
\def\shortciteNP{\def\@citeseppen{-1000}%
    \def\@cite##1##2{##1\if@tempswa , ##2\fi}%
    \def\citeauthoryear##1##2##3{##2 ##3}\@internalcite}
\def\shortciteN{\def\@citeseppen{-1000}%
    \def\@cite##1##2{##1\if@tempswa, ##2\else{}\fi}%
    \def\citeauthoryear##1##2##3{##2 (##3)}\@citedata}
\def\shortciteA{\def\@citeseppen{-1000}%
    \def\@cite##1##2{(##1\if@tempswa , ##2\fi)}%
    \def\citeauthoryear##1##2##3{##2}\@internalcite}
\def\shortciteANP{\def\@citeseppen{-1000}%
    \def\@cite##1##2{##1\if@tempswa , ##2\fi}%
    \def\citeauthoryear##1##2##3{##2}\@internalcite}
\def\citeyear{\def\@citeseppen{-1000}%
    \def\@cite##1##2{(##1\if@tempswa , ##2\fi)}%
    \def\citeauthoryear##1##2##3{##3}\@citedata}
\def\citeyearNP{\def\@citeseppen{-1000}%
    \def\@cite##1##2{##1\if@tempswa , ##2\fi}%
    \def\citeauthoryear##1##2##3{##3}\@citedata}
%
%
%
\def\@citedata{%
    \@ifnextchar [{\@tempswatrue\@citedatax}%
                  {\@tempswafalse\@citedatax[]}%
}

\def\@citedatax[#1]#2{%
\if@filesw\immediate\write\@auxout{\string\citation{#2}}\fi%
  \def\@citea{}\@cite{\@for\@citeb:=#2\do%
    {\@citea\def\@citea{, }\@ifundefined
       {b@\@citeb}{{\bf ?}%
       \@warning{Citation `\@citeb' on page \thepage \space undefined}}%
{\csname b@\@citeb\endcsname}}}{#1}}%

%
\def\@citex[#1]#2{%
\if@filesw\immediate\write\@auxout{\string\citation{#2}}\fi%
  \def\@citea{}\@cite{\@for\@citeb:=#2\do%
    {\@citea\def\@citea{; }\@ifundefined
       {b@\@citeb}{{\bf ?}%
       \@warning{Citation `\@citeb' on page \thepage \space undefined}}%
{\csname b@\@citeb\endcsname}}}{#1}}%

%
\def\@biblabel#1{}
\makeatother



\newdimen\bibindent
\bibindent=0.0em
\def\thebibliography#1{\section*{\refname}\list
   {}{\settowidth\labelwidth{[#1]}
   \leftmargin\parindent
   \itemindent -\parindent
   \listparindent \itemindent
   \itemsep 0pt
   \parsep 0pt}
   \def\newblock{}
   \sloppy
   \sfcode`\.=1000\relax}


\setlength{\baselineskip}{12.7pt}

\title{A PROXIMAL-POINT ALGORITHM WITH VARIABLE SAMPLE-SIZES ({\bf PPAWSS}) \\ FOR MONOTONE STOCHASTIC VARIATIONAL INEQUALITY PROBLEMS}

\author{Afrooz Jalilzadeh \\ Uday V. Shanbhag\\ [12pt]
Department of Industrial and Manufacturing Engineering, \\Pennsylvania State University\\
 University Park, PA 16802, USA.
}

\maketitle

\section*{ABSTRACT}
We consider a stochastic variational inequality (SVI) problem with a continuous
and monotone mapping over a closed and convex set.  In strongly monotone
regimes, we present a \uvs{variable sample-size averaging scheme} ({\bf
VS-Ave}) that achieves a linear rate with an optimal oracle complexity. \afj{In
addition, the iteration complexity is shown to display a muted dependence on
the condition number compared with standard variance-reduced projection
schemes.} 
To contend with merely monotone maps, we develop
amongst the first \uvs{proximal-point algorithms with variable
sample-sizes ({\bf PPAWSS})}, where increasingly accurate solutions of
strongly monotone SVIs are obtained via ({\bf VS-Ave}) at every step. This
allows for achieving \afj{a sublinear convergence rate} \uvs{that
matches that obtained} for deterministic monotone VIs. Preliminary numerical
evidence suggests that the schemes compares well with competing schemes. 
\section{INTRODUCTION}
Variational inequality problems have a broad range of
applications in engineering, economics, and the applied
sciences. Amongst the most common instances of where such
problems assume relevance is in the minimization of a
differentiable convex function $f$ over a closed and convex
set. Recall that $x$ is a minimizer if and only if  $(y-x)^T
\nabla_x f(x) \geq 0$ for all $y \in X,$ a variational
inequality problem succinctly denoted by VI$(X,\nabla_x
f)$. More generally, the variational inequality problem VI$(X,F)$
requires an $x \in X$ such that 
\begin{align} \label{VI$(X,F)$}
(y-x)^T F(x) \geq 0, \quad \forall y \in X. \end{align}
\vvs{In addition}, such problems find
application in modeling saddle-point problems, convex Nash games, traffic equilibrium problems,
economic equilibrium problems, amongst others
(see~\citeN{facchinei2007finite}).  While deterministic
variants~\cite{facchinei2007finite} have seen received
significant study over the last several decades, less is known
regarding the stochastic variant where
the map is replaced by its expectation-valued \vvs{counterpart}. \vvs{Such a problem}, denoted by SVI$(X,F)$, requires finding an $x\in X$ such that
\begin{align}\label{VI}
(y-x)^TF(x)\geq 0, \quad \forall y\in X, \end{align}
where $F(x)\triangleq \mathbb E[G(x,\xi)]$, $\xi: \Omega \to \Real^d$, ${G}: X \times \Real^d  \rightarrow
\mathbb{R}^n$, and the associated probability space is denoted by $(\Omega, {\cal F}, \mathbb{P})$.

Amongst the earliest schemes for resolving SVI$(X,F)$ via stochastic
approximation was \vvs{presented} by~\citeN{jiang08stochastic}
for strongly monotone maps. Regularized variants were developed
by~\shortciteN{koshal13regularized} for merely monotone
regimes while Lipschitzian requirements were weakened by
combining local smoothing with regularization
by~\shortciteN{yousefian2013regularized} and~\shortciteN{yousefian17smoothing}. \vvs{A rate statement of $\mathcal{O}(1/\sqrt{k})$}  for
the merely monotone regime (\vvs{in terms of a suitably defined
gap function defined in \eqref{gap-fn}}) was first provided by
~\shortciteN{juditsky2011solving}, {where $k$ \vvs{denotes} the number
of iterations}; this appears to have been the best known rate
for monotone stochastic variational inequality problems.

\vvs{When} sampling is ``cheap'', \vvs{
recent} efforts \uvs{have attempted to} reduce the gap between the rates of
convergence between deterministic schemes and their stochastic
counterparts. In particular, by increasing the sample-size
associated with \vvs{approximating} the gradient at a suitable
pace, the rate may be improved. In the context
of strongly convex stochastic optimization, linear rates of
convergence have been proven in both smooth 
(see~\citeN{shanbhag15budget} and~\citeN{jofre2017variance}),
\vvs{and a subclass of nonsmooth settings} (see 
~\shortciteN{jalilzadeh2018optimal}). Optimal rate statements of
$\mathcal{O}(1/k^2)$ via accelerated variance-reduced schemes
in stochastic convex differentiable settings were developed by \vvs{~\citeN{ghadimi2013stochastic} and} 
~\citeN{jofre2017variance} matching the best known deterministic
rate.
Similarly, a rate of $\mathcal{O}(1/k)$ was provided for nonsmooth
but smoothable \vvs{stochastic convex optimization problems} by~\shortciteN{jalilzadeh2018optimal}. To the best
of our knowledge, the only variance-reduced scheme for  monotone
stochastic \vvs{variational inequality problems} was presented by~\shortciteN{iusem2017extragradient}
and is equipped with a rate of $\mathcal{O}(1/k)$, matching the
deterministic rate for monotone variational inequality
problems. \vvs{It is worth emphasizing that the monotone
stochastic variational inequality problems can capture
stochastic convex optimization problems (with possibly
expectation-valued problems), stochastic convex Nash games, and
a range of stochastic equilibrium problems. In fact, stochastic
convex optimization problems can be solved via acceleration and
variance reduction~\cite{jofre2017variance}. However,
variational inequality problems are generally not equipped with
an appropriate with a natural ``objective'' function but the 
gap function (which is generally nonsmooth) serves as a metric for measuring progress of a
scheme.  The best known rate for
monotone deterministic variational inequality problems is
$\mathcal{O}(1/k)$ (in terms of a gap function); recall that a
monotone variational inequality problem can be recast as
nonsmooth stochastic convex problem (for which the best known
rate under smoothability requirements is $\mathcal{O}(1/k)$ in
function values)}.      

 {\bf (I). Linearly convergent schemes for strongly
monotone SVIs.} When {$F(x)$ is strongly monotone {over
$X$}, i.e. $F(x)-F(y), x-y\rangle\geq \mu \|x-y\|^2$ for all
$x,y\in X$, {the unique} solution $x^*$ of \eqref{VI}
satisfies {the following} \us{for all $y \in X$.}
\begin{align*}
\langle F(y), x^*-y\rangle + {1\over 2}\mu \|y-x^*\|^2  \overset{\tiny [\mbox{Strong. monot}]}{\leq} \overbrace{\langle F(x^*), x^*-y\rangle}^{\ \leq \ 0} - {1\over 2}\mu \|y-x^*\|^2 \overset{\tiny [x^* \in \mbox{SOL}(X,F)]}{\leq} 0.
\end{align*}
Recall that a gap function associated with  \eqref{VI$(X,F)$}, denoted
by  $g:\mathbb R^n\rightarrow \mathbb R$, is a nonnegative
function on $X$ and $g(x^*)=0$ for $x^*\in \mathbb R^n$ if and
only if $x^*$ is a solution of SVI. Therefore, solving
SVI$(X,F)$ is equivalent to minimizing a gap function on
$X$. Inspired by \citeN{nesterov2011solving}, we \vvs{define $g(x)$ as follows.} \begin{align} \label{gap-fn} g(x) \triangleq \sup_{y\in X}\left\{
\langle F(y), x-y\rangle+{1\over 2}\mu \|y-x\|^2 \right\}.\end{align}}
\citeN{nesterov2011solving} proposed a linearly
convergent \vvs{averaging scheme that is reliant on two projections at every step, the first of which \uvs{is a projection of} an averaged vector while the second is the more standard projection.} \vvs{We apply this avenue with two key distinctions: (i) We utilize a noise-corrupted estimate of the expectation-valued map; and (ii) variance reduction is employed to reduce the bias as $k$, the iteration counter. In Section \ref{SVI sec.}, we show that the resulting \uvs{variable sample-size averaging ({\bf VS-Ave})} scheme admits a linear rate of convergence when the sample-size is increased at a
geometric rate.} Importantly, the scheme
enjoys a muted dependence on the condition number of the map,
in contrast with standard variance-reduced projection schemes \vvs{and is characterized by an oracle complexity of $\mathcal{O}(1/\epsilon^{\beta})$ (where $\beta > 1$) for computing an
$\epsilon$-solution.}

 {\bf (II).  Stochastic proximal schemes for monotone SVIs.} In
section \ref{SSP sec.}, we develop a stochastic generalization
of the proximal-point scheme, referred to as a \uvs{proximal-point algorithm with variable sample-sizes ({\bf PPAWSS})} \vvs{for monotone SVIs} \vvs{where} the \vvs{strongly monotone} proximal subproblems
are solved with increasing \vvs{accuracy}
via ({\bf VS-Ave}). \vvs{We} show
that the sequence of iterates converges to the solution at the
rate of $\mathcal{O}(1/k)$, matching the deterministic rate  while the iteration complexity in inner projection
steps is bounded as $\mathcal O\left(\tfrac{1}{\epsilon}\log\tfrac{1}{\epsilon}\right)$. {Preliminary numerics in
Section 5 suggest that the scheme compares well with competing
techniques.} 

\section{LINEARLY CONVERGENT SCHEMES FOR STRONGLY MONOTONE SVI PROBLEMS}\label{SVI sec.}
In this section, inspired by \citeN{nesterov2011solving}, we propose a
scheme for solving strongly monotone SVIs which admits a linear rate of convergence with a 
constant of {${L/\mu}$} in the iteration complexity
bound, significantly smaller than {$(L/\mu)^2$}, obtained from
standard extensions of projection methods. Our rate statements
rely on the following assumptions on $F$.
\begin{assumption}\label{assump-F}
Assume that operator $F:X\rightarrow \mathbb R^n$ is $\mu$-strongly monotone and $L$-
Lipschitz continuous on $X$, i.e. $\|F(x)-F(y)\|\leq L\|x-y\|$ \us{for all $x, y \in X$.}
\end{assumption}
Next, if $\mathcal F_{k}$ denotes the information history at epoch $k$, then we have the following requirements on the associated filtrations \vvs{where $\wbar_{k,N_k} \triangleq \tfrac{1}{N_k}{\sum_{j=1}^{N_k} (F(y_k) - G(y_k,\omega_{j,k}))}$ and $\wbar'_{k,N_k} \triangleq \tfrac{1}{N_k} {\sum_{j=1}^{N_k} (F(x_k) - G(x_k,\omega_{j,k}))}.$}
\begin{assumption}\label{assump_error}
 There exists $\nu,\tilde \nu>0$ such that $\mathbb E[\wbar_{k+1,\vvs{N_{k+1}}}\mid  \mathcal F_{k}\cup \mathcal F'_{k-1}]=0$, $\mathbb E[ \wbar'_{k,\vvs{N_k}}\mid  \mathcal F_{k}\cup \mathcal F'_{k-1}]=0$, $\mathbb E[\| \wbar_{k+1,\vvs{N_{k+1}}}\|^2\mid  \mathcal F_{k}\cup \mathcal F'_{k-1}]\leq \tfrac{\nu^2}{\vvs{N_{k+1}}}$ and $\mathbb E[\| \vvs{\wbar}'_{k,\vvs{N_k}}\|^2\mid  \mathcal F_{k}\cup \mathcal F'_{k-1}]\leq \tfrac{\tilde \nu^2}{\vvs{N_k}}$ holds almost surely 
 for all $k$, 
where $\mathcal{F}_k \triangleq \sigma\{y_0, y_1, \hdots, y_{k-1}\}$, $w'_k \triangleq F(x_k)- G(x_k,\xi_k)$ and  $\mathcal{F'}_k \triangleq \sigma\{x_0, x_1, \hdots, x_{k-1}\}$.  \end{assumption} Throughout the paper, we exploit the following basic lemma stated without proof.  
\begin{lemma}\label{3norms}
Given a symmetric positive definite matrix $Q$, we have the following for any $\nu_1,\nu_2,\nu_3$:
\begin{align*}(\nu_2-\nu_1)^TQ(\nu_3-\nu_1)={1\over 2}(\|\nu_2-\nu_1\|^2_Q+\|\nu_3-\nu_1\|^2_Q-\|\nu_2-\nu_3\|^2_Q), \mbox{ where } \|\nu\|_Q \triangleq \sqrt{\nu^TQ\nu}.\end{align*}
\end{lemma}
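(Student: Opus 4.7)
The plan is to view this as a weighted (Mahalanobis) analogue of the classical polarization identity $\langle a,b\rangle = \tfrac{1}{2}(\|a\|^2+\|b\|^2-\|a-b\|^2)$ in the inner product induced by $Q$. Since $Q$ is symmetric positive definite, the bilinear form $\langle u,v\rangle_Q \triangleq u^T Q v$ is a genuine inner product on $\mathbb{R}^n$ with associated norm $\|\cdot\|_Q$, so I expect the identity to follow by a direct algebraic expansion with no analytic content.

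First I would introduce the substitutions $a \triangleq \nu_2 - \nu_1$ and $b \triangleq \nu_3 - \nu_1$ so that $a - b = \nu_2 - \nu_3$. Then the claim reduces to showing
\begin{align*}
a^T Q b = \tfrac{1}{2}\bigl(\|a\|_Q^2 + \|b\|_Q^2 - \|a-b\|_Q^2\bigr).
\end{align*}
I would establish this by expanding the right-hand side: write $\|a-b\|_Q^2 = (a-b)^T Q (a-b) = a^T Q a - a^T Q b - b^T Q a + b^T Q b$, and invoke the symmetry of $Q$ (so $b^T Q a = a^T Q b$) to collapse the cross terms into $-2\, a^T Q b$. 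Rearranging yields the desired expression for $a^T Q b$; substituting back $a = \nu_2-\nu_1$, $b = \nu_3-\nu_1$, and $a-b = \nu_2-\nu_3$ completes the argument.

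There is essentially no obstacle here: the only subtlety is ensuring the cross-term identity $a^T Q b = b^T Q a$, which is precisely the hypothesis that $Q$ is symmetric (positive definiteness is actually not used in the identity itself, only in ensuring $\|\cdot\|_Q$ is a bona fide norm as written in the statement). Because the manipulation is a two-line expansion, I would present it compactly in a single display, which explains why the authors felt comfortable stating the lemma without proof.
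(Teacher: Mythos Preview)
Your proposal is correct and is exactly the standard polarization-identity expansion one would use; the paper in fact states this lemma without proof, so there is nothing to compare against beyond noting that your argument fills the omitted verification.
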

\vvs{Before proceeding, we recall} some properties of gap function $g(x)=\sup_{y\in X}\left\{ \langle F(y), x-y\rangle+{1\over 2}\mu \|y-x\|^2 \right\}$. 
\begin{theorem}{ \cite{nesterov2011solving}}
The gap function $g(x)$ is well defined and $\mu$-strongly convex on $X.$ Moreover, $g(x)$ is nonnegative on $X$ and vanishes only at the unique solution of~\eqref{VI}. 
\end{theorem}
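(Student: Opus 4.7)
The plan is to establish the four assertions in the theorem --- well-definedness of the supremum, $\mu$-strong convexity, nonnegativity on $X$, and the vanishing characterization --- in sequence, using only Assumption \ref{assump-F} together with convexity of $X$.

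\emph{Well-definedness.} For a fixed $x \in X$, I would bound the integrand in the supremum by invoking strong monotonicity:
\[
\langle F(y), x-y\rangle = -\langle F(y)-F(x), y-x\rangle - \langle F(x), y-x\rangle \leq -\mu\|y-x\|^2 + \|F(x)\|\,\|y-x\|.
\]
Adding $\tfrac{1}{2}\mu\|y-x\|^2$ yields an upper bound of $-\tfrac{1}{2}\mu\|y-x\|^2 + \|F(x)\|\,\|y-x\|$, a concave-down quadratic in $\|y-x\|$ that is uniformly bounded above over $y \in X$. Hence $g(x)<\infty$ for every $x \in X$.

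\emph{Strong convexity.} For each fixed $y \in X$, the map $x \mapsto \langle F(y),x-y\rangle + \tfrac{1}{2}\mu\|y-x\|^2$ is the sum of a linear function of $x$ and a $\mu$-strongly convex quadratic, and is therefore $\mu$-strongly convex in $x$. I would then use the fact that the pointwise supremum of a family of $\mu$-strongly convex functions is itself $\mu$-strongly convex: subtracting $\tfrac{\mu}{2}\|x\|^2$ from every member produces a family of convex functions whose supremum is convex.

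\emph{Nonnegativity and zero characterization.} Substituting $y = x$ into the defining supremum immediately gives $g(x) \geq 0$. If $x^*$ solves \eqref{VI}, the chain of inequalities displayed just prior to \eqref{gap-fn} shows that $\langle F(y),x^*-y\rangle + \tfrac{1}{2}\mu\|y-x^*\|^2 \leq 0$ for all $y \in X$, whence $g(x^*) \leq 0$ and so $g(x^*)=0$. Conversely, suppose $g(\bar x)=0$. For any $z \in X$ and $t \in (0,1]$, convexity of $X$ permits the choice $y = \bar x + t(z-\bar x) \in X$ in the defining supremum, yielding
\[
-t\,\langle F(\bar x + t(z-\bar x)), z-\bar x\rangle + \tfrac{1}{2}\mu t^2\|z-\bar x\|^2 \leq 0.
\]
Dividing by $t$ and letting $t \to 0^+$, with continuity of $F$ supplied by the Lipschitz condition in Assumption \ref{assump-F}, delivers $\langle F(\bar x), z-\bar x\rangle \geq 0$ for every $z \in X$, so $\bar x$ solves \eqref{VI}. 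Uniqueness of the solution under strong monotonicity is standard.

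\emph{Main obstacle.} The subtlest step is the well-definedness argument: absent the strong-monotonicity bound, the term $\langle F(y),x-y\rangle$ could in principle grow linearly in $\|y\|$ and overwhelm the quadratic damping once $X$ is unbounded. Extracting the $-\mu\|y-x\|^2$ contribution by splitting $F(y)=(F(y)-F(x))+F(x)$ is exactly what tames this term and ensures the supremum is attained at a finite value. Once this is in hand, strong convexity, nonnegativity, and the zero characterization follow by direct substitution and a one-variable limit argument.
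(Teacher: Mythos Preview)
Your proof is correct. Note, however, that the paper does not actually provide its own proof of this theorem: it is stated with a citation to \citeN{nesterov2011solving} and no argument is given, so there is nothing to compare against at the level of proof technique. Your four-part argument (well-definedness via the strong-monotonicity splitting $F(y)=(F(y)-F(x))+F(x)$, strong convexity via pointwise suprema, nonnegativity by substituting $y=x$, and the vanishing characterization via a Minty-type limit along $y=\bar x+t(z-\bar x)$) is self-contained and sound, and supplies a proof where the paper simply invokes the reference.
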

We present an averaging-based variance-reduced ({\bf VS-Ave}) scheme (Alg.~\ref{SVI alg.}) for  strongly monotone SVIs where $K$ and $\Pi_X(y)$ denote the total no. of iterations and  the projection of $y$ onto $X$, respectively. 
\\
\begin{algorithm}[{\bf VS-Ave}$(F,y_0,\mu,L,K$)] \em \label{SVI alg.}
\begin{enumerate}
\item[]
\item[(1.0)] Given $y_0\in X$ and let $\gamma_0=1$, $k = 0$; $\Gamma_0 = 1$; 
\item[(1.1)] $x_{k} = \Pi_{X}\left[{1\over \sum_{i=0}^k \gamma_i}\left(\sum_{i=0}^k \gamma_i\left(y_i-{1\over  \mu}{(F(y_i)+\bar w_{i,N_i})}\right)\right)\right];$ 
\item[(1.2)] $y_{k+1}=\Pi_{X}\left[x_k-(\tfrac{1}{ L}) {(F(x_k)+\bar w'_{k,N_k})}\right];$
\item[(1.3)] $\gamma_{k+1}={\mu\over \mu+ L}\Gamma_k; \Gamma_{k+1} = \Gamma_k + \gamma_{k+1};$
\item[(1.4)] If $k > K$, then stop; else $k = k+1$; return
to (1).
\item[(1.5)] Return $\bar y_k={1\over \Gamma_k}\sum_{i=0}^k \gamma_iy_i$.
\end{enumerate}
\end{algorithm}
\begin{remark}\label{proj.}{By definition of the projection in step (1.1) of Algorithm \ref{SVI alg.}, we have that 
$$\Big\langle x_k-{1\over \sum_{i=1}^k \gamma_i }\left(\sum_{i=1}^k \gamma_i \left[y_i-{1\over \mu} (F(y_i) {+\bar w_{i,N_i}})\right]\right),x_k-x\Big\rangle\leq0 \quad \mbox{for all} x\in X.$$ 
This can be equivalently represented as $\sum_{i=1}^k \gamma_i \langle F(y_i) {+\bar w_{i,N_i}}+ \mu (x_k-y_i),x_k-x\rangle\leq0$ for all $x\in X$, which \vvs{captures the optimality of $x_k$ with respect to} $\max_{x\in X} \left\{\sum_{i=0}^{k} \gamma_i\left[\langle F(y_i) {+\bar w_{i,N_i}},y_i-x\rangle-{1\over 2} \mu \|x-y_i\|^2\right]\right\}$}. 
\end{remark}
{In the following lemma}, we provide an upper bound for the gap function. 
\begin{lemma}{ \cite{nesterov2011solving}}\label{bound by delta}
Consider a sequence of positive weights $\{\gamma_i\}_{i=0}^K$ and points $\{y_i\}_{i=0}^K\subset\mathbb R^n$. Let $\Gamma_K=\sum_{i=0}^K \gamma_i$, $\bar y_K={1\over \us{\Gamma_k}}\sum_{i=0}^K \gamma_iy_i$ and $ {\tilde\cA_K}=\max_{x\in X}\left\{\sum_{i=0}^K \gamma_i\left[\langle F(y_i),y_i-x\rangle-{1\over 2} \mu \|x-y_i\|^2\right]\right\}$. Then, $g(\bar y_K)\leq{1\over \Gamma_K} {\tilde\cA_K}.$
\end{lemma}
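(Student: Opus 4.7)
The plan is to bound $g(\bar y_K)$ directly from its variational definition by exploiting two convex-combination inequalities in tandem. Starting from
\[
g(\bar y_K) \;=\; \sup_{y\in X}\!\left\{\langle F(y),\bar y_K - y\rangle + \tfrac{1}{2}\mu\|y-\bar y_K\|^2\right\},
\]
I first rewrite the linear term using $\bar y_K = \tfrac{1}{\Gamma_K}\sum_{i=0}^K \gamma_i y_i$, so that
$\langle F(y),\bar y_K - y\rangle = \tfrac{1}{\Gamma_K}\sum_i \gamma_i \langle F(y), y_i - y\rangle$. The goal is then to pass from $F(y)$ to $F(y_i)$ inside each summand, which is exactly what strong monotonicity affords.

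The first key step is to invoke $\mu$-strong monotonicity of $F$ in the form $\langle F(y) - F(y_i), y - y_i\rangle \geq \mu\|y - y_i\|^2$, rearranged as
\[
\langle F(y), y_i - y\rangle \;\leq\; \langle F(y_i), y_i - y\rangle - \mu\|y - y_i\|^2.
\]
Summing against the nonnegative weights $\gamma_i/\Gamma_K$ yields an upper bound on $\langle F(y),\bar y_K - y\rangle$ that involves only the $F(y_i)$'s plus a quadratic penalty of size $-\mu\|y-y_i\|^2$. The second key step is Jensen's inequality (convexity of $\|\cdot\|^2$) applied to $\bar y_K$, giving
\[
\tfrac{1}{2}\mu\|y-\bar y_K\|^2 \;\leq\; \tfrac{1}{2}\mu \cdot \tfrac{1}{\Gamma_K}\sum_{i=0}^K \gamma_i\|y-y_i\|^2.
\]

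Adding these two bounds, the quadratic terms combine as $-\mu\|y-y_i\|^2 + \tfrac{1}{2}\mu\|y-y_i\|^2 = -\tfrac{1}{2}\mu\|y-y_i\|^2$, producing exactly the integrand inside $\tilde\cA_K$. Concretely,
\[
\langle F(y),\bar y_K - y\rangle + \tfrac{1}{2}\mu\|y-\bar y_K\|^2 \;\leq\; \tfrac{1}{\Gamma_K}\sum_{i=0}^K \gamma_i\!\left[\langle F(y_i), y_i - y\rangle - \tfrac{1}{2}\mu\|y - y_i\|^2\right].
\]
Taking the supremum over $y \in X$ on both sides and noting the right-hand side equals $\tilde\cA_K/\Gamma_K$ by definition delivers the claim $g(\bar y_K)\le \tilde\cA_K/\Gamma_K$.

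There is no real obstacle beyond bookkeeping: the proof is a standard Nesterov-style convexification of a supremum via monotonicity plus Jensen. The only subtlety worth double-checking is the coefficient alignment of the two quadratic terms, which is precisely what forces the $\tfrac{1}{2}\mu$ appearing both in the definition of $g$ and in $\tilde\cA_K$ to be consistent; any other scaling would leave a residual $\|y-y_i\|^2$ term that one could not cleanly absorb.
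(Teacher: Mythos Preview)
Your argument is correct. The paper does not supply its own proof of this lemma; it simply cites \cite{nesterov2011solving} and uses the result. Your derivation---writing $\bar y_K$ as a convex combination, applying $\mu$-strong monotonicity to replace $F(y)$ by $F(y_i)$ at the cost of $-\mu\|y-y_i\|^2$, invoking Jensen on $\|y-\bar y_K\|^2$, and then taking the supremum over $y\in X$---is precisely the standard proof one finds in Nesterov's paper, so there is nothing to contrast.
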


Next, we derive a bound on the conditional expectation on ${\mathcal A}_{k+1}$.

\begin{lemma} \label{A_{k+1}<A_k}
Consider the sequences generated by Algorithm \ref{SVI alg.}. \vvs{Suppose} 
Assumptions \ref{assump-F} and \ref{assump_error} hold, and
$\cA_k \triangleq \max_{x\in X}\left\{\sum_{i=0}^k \gamma_i\left[\langle F(y_i)
{+\bar w_{i,N_i}},y_i-x\rangle-{1\over 2} \mu \|x-y_i\|^2\right]\right\}.$
Then \vvs{the following holds for all $k$.} $$\mathbb
E[\cA_{k+1}\mid \mathcal F_{k}\cup \mathcal F'_{k-1}]\leq \cA
_k+\gamma_{k+1}\left(\tfrac{1}{c}+\tfrac{1}{\mu}\right)(\tfrac{\nu^2}{N_{k+1}}+\tfrac{(\tilde \nu)^2}{N_{k}}), \mbox{ where }  c \triangleq \tfrac{L \mu}{L+ \mu}.$$
\end{lemma}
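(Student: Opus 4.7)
The plan is to view $\cA_k$ as the maximum of a concave function $\phi_k(\cdot)$ attained at $x_k$, and bound the growth $\cA_{k+1}-\cA_k$ by leveraging strong concavity, Lipschitz continuity, and the projection optimality in step (1.2). Specifically, define
\[
\phi_k(x) \triangleq \sum_{i=0}^{k}\gamma_i\Big[\langle F(y_i)+\bar w_{i,N_i},y_i-x\rangle - \tfrac{1}{2}\mu\|x-y_i\|^2\Big].
\]
By Remark \ref{proj.}, $\cA_k = \phi_k(x_k)$, and $-\phi_k$ is strongly convex in $x$ with modulus $\mu\Gamma_k$. Hence for every $x\in X$, $\phi_k(x)\le \cA_k-\tfrac{\mu\Gamma_k}{2}\|x-x_k\|^2$, which I would apply at $x=x_{k+1}$.

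Next, I would decompose $\cA_{k+1}=\phi_{k+1}(x_{k+1})=\phi_k(x_{k+1})+\gamma_{k+1}\big[\langle F(y_{k+1})+\bar w_{k+1,N_{k+1}},y_{k+1}-x_{k+1}\rangle-\tfrac{1}{2}\mu\|x_{k+1}-y_{k+1}\|^2\big]$ and split $F(y_{k+1})=(F(y_{k+1})-F(x_k))+F(x_k)$. The difference is bounded using $L$-Lipschitzness together with Young's inequality (with constant $1$) to produce $\tfrac{\gamma_{k+1}L}{2}\|y_{k+1}-x_k\|^2+\tfrac{\gamma_{k+1}L}{2}\|y_{k+1}-x_{k+1}\|^2$. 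For the $F(x_k)$ piece, the projection characterization of $y_{k+1}=\Pi_X[x_k-(1/L)(F(x_k)+\bar w'_{k,N_k})]$ yields $\langle F(x_k)+\bar w'_{k,N_k},y_{k+1}-x_{k+1}\rangle \le L\langle y_{k+1}-x_k,x_{k+1}-y_{k+1}\rangle$, and then Lemma \ref{3norms} with $Q=I$ converts the right-hand side into a sum of squared norms involving $\|x_{k+1}-x_k\|^2$, $\|y_{k+1}-x_k\|^2$, and $\|x_{k+1}-y_{k+1}\|^2$.

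Collecting terms and invoking the update $\gamma_{k+1}=\tfrac{\mu}{\mu+L}\Gamma_k$, one finds that the $\|y_{k+1}-x_k\|^2$ coefficients cancel and $\gamma_{k+1}L-\mu\Gamma_k=-\mu\gamma_{k+1}$ collapses the remaining squared terms to yield the pathwise bound
\[
\cA_{k+1}\le \cA_k-\tfrac{\mu\gamma_{k+1}}{2}\|x_{k+1}-x_k\|^2-\tfrac{\mu\gamma_{k+1}}{2}\|x_{k+1}-y_{k+1}\|^2+\gamma_{k+1}\langle \bar w_{k+1,N_{k+1}}-\bar w'_{k,N_k},\,y_{k+1}-x_{k+1}\rangle.
\]
Finally, I would apply Young's inequality to each noise inner product with constants $\alpha_1,\alpha_2$ chosen so that the resulting $\|y_{k+1}-x_{k+1}\|^2$ coefficients are absorbed by the negative term (i.e.\ $\alpha_1+\alpha_2\le\mu$), take conditional expectation on $\mathcal F_k\cup\mathcal F'_{k-1}$, and invoke Assumption \ref{assump_error} to replace $\mathbb E[\|\bar w_{k+1,N_{k+1}}\|^2]$ and $\mathbb E[\|\bar w'_{k,N_k}\|^2]$ by $\nu^2/N_{k+1}$ and $\tilde\nu^2/N_k$ respectively. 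Calibrating $\alpha_1$ and $\alpha_2$ so that $1/(2\alpha_i)$ produces exactly $1/c+1/\mu$ gives the stated result.

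The main obstacle is the bookkeeping in the assembly step: one must verify that the precise choice $\gamma_{k+1}=\tfrac{\mu}{\mu+L}\Gamma_k$ is exactly what is needed to cancel the $\|y_{k+1}-x_k\|^2$ terms and reduce the $\|x_{k+1}-x_k\|^2$ coefficient from $\tfrac{\gamma_{k+1}L-\mu\Gamma_k}{2}$ to $-\tfrac{\mu\gamma_{k+1}}{2}$; this is where the algorithm's particular step size enters and drives the whole argument. By comparison, the Young-inequality calibration in the final step is routine.
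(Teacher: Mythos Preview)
Your proposal is correct and follows essentially the same approach as the paper: bound $\phi_k$ at the new maximizer via strong concavity and Remark~\ref{proj.}, split $F(y_{k+1})=(F(y_{k+1})-F(x_k))+F(x_k)$, invoke projection optimality of $y_{k+1}$, apply the three-point identity of Lemma~\ref{3norms}, and finish with Young's inequality on the noise before taking conditional expectations. The only noteworthy differences are organizational: the paper bounds the maximand uniformly over a generic $x\in X$ (and then must make the $\|x-x_k\|^2$ coefficient vanish exactly), whereas you evaluate directly at $x=x_{k+1}$ and can afford a strictly negative leftover $-\tfrac{\mu\gamma_{k+1}}{2}\|x_{k+1}-x_k\|^2$; relatedly, the paper splits the noise pairing through $y_{k+1}-x=(y_{k+1}-x_k)+(x_k-x)$ with Young constants $c$ and $\mu$ (which is where $c=\tfrac{L\mu}{L+\mu}$ arises naturally), while you absorb everything against the single negative term $-\tfrac{\mu\gamma_{k+1}}{2}\|x_{k+1}-y_{k+1}\|^2$ and then calibrate $\alpha_1,\alpha_2$ post hoc to match the stated constant. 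Your route is slightly cleaner, at the cost of making the appearance of $c$ look somewhat artificial.
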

\begin{proof}
From the definition of $\cA_{k+1}$ we have that:
\begin{align}\label{def_delta}
\cA_{k+1}\nonumber&=\max_{x\in X}\bigg\{\sum_{i=0}^{k} \gamma_i\underbrace{\left[\langle F(y_i) {+\bar w_{i,N_i}},y_i-x\rangle-{1\over 2} \mu \|x-y_i\|^2\right]}_{\tiny {\mbox{Term  (A)}}} \\
&  +\gamma_{k+1}\underbrace{\left[\langle F(y_{k+1}) {+\bar w_{k+1,N_{k+1}}},y_{k+1}-x\rangle-{1\over 2} \mu \|x-y_{k+1}\|^2\right]}_{\tiny \mbox{Term  (B)}} \bigg\}.
\end{align}
\aj{One can easily check that term $(A)$ in \eqref{def_delta} is $\mu$-strongly concave \us{in $x$.} Therefore, we obtain the following:
\begin{align}\label{sconcave}
&\nonumber\sum_{i=0}^{k} \gamma_i\left[\langle F(y_i) {+\bar w_{i,N_i}},y_i-x\rangle-{\mu\over 2}  \|x-y_i\|^2\right]   =   \sum_{i=0}^{k} \gamma_i\left[\langle F(y_i) {+\bar w_{i,N_i}},y_i-x_k\rangle-{\mu \over 2}  \|x_k-y_i\|^2\right] \\ &+\sum_{i=0}^{k} \gamma_i\left[\langle F(y_i) {+\bar w_{i,N_i}}+ \mu(x_k-y_i),x_k-x\rangle-{ \mu\over 2}\|x-x_k\|^2\right] \leq\cA_k-{ \mu \Gamma_k\over 2}\|x-x_k\|^2, \quad\forall x\in X,
\end{align}
where in the last inequality, we used $\sum_{i=1}^k \gamma_i \langle F(y_i) {+\bar w_{i,N_i}}+ \mu (x_k-y_i),x_k-x\rangle\leq0$ for all $x\in X$ from Remark \ref{proj.} and  $\Gamma_k=\sum_{i=0}^{k} \gamma_i$.}  From the definition of $y_k$ in Algorithm~\ref{SVI alg.}, 
\begin{align}
\label{yk-opt}
\langle F(x_k) {+\bar w'_{k,N_k}}+\af{L} (y_{k+1}-x_k),y_{k+1}-x\rangle \leq 0  \mbox{ for all } x\in X.
\end{align} Consequently, Term (B) can be bounded as follows by using \eqref{yk-opt} and the Cauchy-Schwarz inequality:
\begin{align}\label{bound_proof}
\langle \nonumber&F(y_{k+1}) {+\bar w_{k+1,N_{k+1}}},y_{k+1}-x\rangle-{ \mu \over 2 }\|x-y_{k+1}\|^2\\ \nonumber&=\langle F(y_{k+1})-F(x_k),y_{k+1}-x\rangle-{ \mu \over 2 }\|x-y_{k+1}\|^2+\langle F(x_k) {+\bar w'_{k,N_k}},y_{k+1}-x\rangle {+\langle  {\bar w_{k+1,N_{k+1}}}- {\bar w'_{k,N_k}},y_{k+1}-x\rangle}\\ \nonumber&
\overset{\eqref{yk-opt}}{\leq} \|F(y_{k+1})-F(x_k)\|\|y_{k+1}-x\|-{ \mu \over 2 }\|x-y_{k+1}\|^2+\af{L} \left\langle (y_{k+1}-x_k),x-y_{k+1}\right\rangle\\& {+\langle  {\bar w_{k+1,N_{k+1}}}- {\bar w'_{k,N_k}},y_{k+1}-x\rangle}.
\end{align}
\us{Applying} Lemma \ref{3norms} \us{with} $Q = I$, we have that \us{the penultimate term can be expressed as follows.}
\begin{align}\label{3norms_p}
2\left\langle (y_{k+1}-x_k,x-y_{k+1})\right\rangle=\|x-x_k\|^2-\|y_{k+1}-x_k\|^2-\|x-y_{k+1}\|^2.
\end{align}
Hence, using \eqref{3norms_p} and the fact that $\us{a^Tb}\leq {1\over 2(L+\mu)}\|a\|^2+{L+\mu\over 2}\|b\|^2$, inequality \eqref{bound_proof} can be written as follows: 
\begin{align}\label{second_term}
\langle \nonumber&F(y_{k+1}) {+\bar w_{k+1,N_{k+1}}},y_{k+1}-x\rangle-{ \mu \over 2 }\|x-y_{k+1}\|^2\\ &\leq {1\over 2(L+\mu)}\|F(y_{k+1})-F(x_k)\|^2+{(L+\mu)\over 2} \|y_{k+1}-x\|^2 - {\mu \over 2}\|x-y_{k+1}\|^2 \notag \\
& \notag + {L \over 2}  \|x-x_k\|^2-{L\over 2} \|y_{k+1}-x_k\|^2 - {L \over 2}\|x-y_{k+1}\|^2 {+\langle  {\bar w_{k+1,N_{k+1}}}- {\bar w'_{k,N_k}},y_{k+1}-x\rangle} \\
& = {1\over 2(L+\mu)}\|F(y_{k+1})-F(x_k)\|^2+{L\over 2} \|x-x_k\|^2-{L\over 2} \|y_{k+1}-x_k\|^2 {+\langle  {\bar w_{k+1,N_{k+1}}}- {\bar w'_{k,N_k}},y_{k+1}-x\rangle}. 
\end{align}
\us{where the} {last term in \eqref{second_term} can be written as 
\begin{align}\label{bound w}
\langle  {\bar w_{k+1,N_{k+1}}}- {\bar w'_{k,N_k}},y_{k+1}-x_k+x_k-x\rangle\nonumber&\leq {1\over c}\left(\|\bar w_{k+1,N_{k+1}}\|^2+\| {\bar w'_{k,N_k}}\|^2\right)+{c\over 2}\|y_{k+1}-x_k\|^2\\&+{1\over \mu}\left(\|\bar w_{k+1,N_{k+1}}\|^2+\| {\bar w'_{k,N_k}}\|^2\right)+{\mu\over 2}\|x_{k}-x\|^2,
\end{align} 
where $c={L \mu\over L+ \mu}   < L$. {It follows that 
\vvs{by utilizing} \eqref{bound w} in  \eqref{second_term}  and then substituting the result and \eqref{sconcave}  in \eqref{def_delta} and by recalling that $\gamma_{k+1}=(\mu/ (L+\mu))\Gamma_k$, we get the following:
\begin{align}
\notag \cA_{k+1} & \leq \cA_{k} - {\mu \Gamma_k \over 2} \|x-x_k\|^2  + \gamma_{k+1} \left[ 
 {1\over 2(L+\mu)}\|F(y_{k+1})-F(x_k)\|^2+{L+\mu\over 2} \|x-x_k\|^2-{L-c\over 2} \|y_{k+1}-x_k\|^2\right.\\ \notag
	& \left. + \left({1\over c}+{1\over \mu}\right)\left(\|\bar w_{k+1,N_{k+1}}\|^2+\| {\bar w'_{k,N_k}}\|^2\right)\right] \\
\label{big bound} \notag
& \leq \cA_k+{\gamma_{k+1}\over 2}\left[{1\over L+\mu}\|F(y_{k+1})-F(x_k)\|^2-(L-c) \|y_{k+1}-x_k\|^2\right]\\
	&   +  \gamma_{k+1}\left({1\over c}+{1\over \mu}\right)\left(\|\bar w_{k+1,N_{k+1}}\|^2+\| {\bar w'_{k,N_k}}\|^2\right).
\end{align}}
  Now using Assumption \ref{assump-F}, we obtain the following.
\begin{align}\label{zero bound}
{1\over L+\mu}\|F(y_{k+1})-F(x_k)\|^2- \|y_{k+1}-x_k\|^2_{L I-cI}&\leq \|y_{k+1}-x_k\|^2_{({L^2\over L+\mu}-L+c){I} }\af{=} 0,
\end{align}
where {the last {equality follows}} from ${L^2\over  \mu+L}+c= L$, \vvs{a consequence of the} definition of $c$. 
Using  \eqref{zero bound} and \eqref{bound w} within \eqref{big bound} and taking conditional \vvs{expectations} with respect to $\mathcal F_{k}\cup \mathcal F'_{k-1}$, we obtain 
\begin{align*}
\mathbb E[\cA_{k+1}\mid \mathcal F_{k}\cup \mathcal F'_{k-1}]&\leq \cA_k+\gamma_{k+1}\left({1\over c}+{1\over \mu}\right)\mathbb E[\|\bar w_{k+1,N_{k+1}}\|^2+\| {\bar w'_{k,N_k}}\|^2\mid  \mathcal F_{k}\cup \mathcal F'_{k-1}],
\end{align*}
which leads to the desired result by using Assumption \ref{assump_error}. }
\end{proof}
\vvs{We now derive rate and complexity statements for Algorithm \ref{SVI alg.}.}
\begin{theorem}[{Rate and Complexity Statement for ({\bf VS-Ave})}]\label{rate svi}
Suppose Assumptions \ref{assump-F} and \ref{assump_error} hold. Suppose $\cA_k$ is as defined in Lemma \ref{A_{k+1}<A_k} and $x^*$ denotes the solution of \eqref{VI}. Consider the iterates generated by Algorithm \ref{SVI alg.} \vvs{and} let $N_k \triangleq \lfloor \rho^{-k}\rfloor $ such that $\rho<1-{1\over \kappa+2}$. 

\noindent (i) \vvs{If} $\kappa \vvs{\triangleq} \tfrac{L}{\mu} $ and $c \triangleq \tfrac{L \mu}{L+ \mu}$, the following holds for all $K$.
\begin{align*}
\tfrac{\mu\mathbb E[\|\bar y_K-x^*\|^2]}{2}\leq \left(g(y_0){\kappa^2}+2\kappa (\nu^2+\tilde \nu^2)\left({1\over c}+{1\over \mu}\right)\left({\kappa+1\over (\kappa+2)(1-\rho)-1}\right)\right)\left(1-{1\over \kappa+2}\right)^K=Cq^K.
\end{align*}
\noindent (ii) \vvs{Suppose $\bar y_K$ is an $\epsilon$-solution such that $\mathbb
E[\|\bar y_K-x^*\|^2]\leq \epsilon$. If $\bar{C} = 2C/\mu$, then Algorithm \ref{SVI alg.} requires $\mathcal O(\kappa \log (\bar{C}/\epsilon))$ steps and $\mathcal O(1/\epsilon)^\beta$ evaluations where $\rho=q^\beta$ \vvs{and $\beta > 1$.}}  \end{theorem}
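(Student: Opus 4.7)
The plan is to convert the recursion in Lemma~\ref{A_{k+1}<A_k} on $\cA_k$ into a rate on $\mathbb{E}[\|\bar y_K-x^*\|^2]$. First, I would lower-bound $\cA_K$ by evaluating its inner max at the solution $x^*\in X$, giving $\cA_K \geq \sum_{i=0}^K \gamma_i[\langle F(y_i)+\bar w_{i,N_i},y_i-x^*\rangle - \tfrac{\mu}{2}\|x^*-y_i\|^2]$. Strong monotonicity of $F$ together with $x^*\in\mbox{SOL}(X,F)$ yields $\langle F(y_i),y_i-x^*\rangle\geq \mu\|y_i-x^*\|^2$, so the deterministic portion is bounded below by $\tfrac{\mu}{2}\sum_i\gamma_i\|y_i-x^*\|^2 \geq \tfrac{\mu \Gamma_K}{2}\|\bar y_K-x^*\|^2$ via Jensen's inequality. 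The stochastic cross-term $\sum_i\gamma_i\langle\bar w_{i,N_i},y_i-x^*\rangle$ has zero expectation by Assumption~\ref{assump_error} and the tower property, since $x^*$ is deterministic and $y_i$ is measurable with respect to the filtration preceding $\bar w_{i,N_i}$. Hence $\tfrac{\mu\Gamma_K}{2}\mathbb{E}[\|\bar y_K-x^*\|^2] \leq \mathbb{E}[\cA_K]$.

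Telescoping Lemma~\ref{A_{k+1}<A_k} then yields $\mathbb{E}[\cA_K] \leq \mathbb{E}[\cA_0] + (1/c+1/\mu)\sum_{k=0}^{K-1}\gamma_{k+1}(\nu^2/N_{k+1}+\tilde\nu^2/N_k)$. Setting $\alpha=\mu/(\mu+L)=1/(\kappa+1)$, the update $\gamma_{k+1}=\alpha\Gamma_k$ gives $\Gamma_k=(1+\alpha)^k$, so $\Gamma_K=q^{-K}$ with $q=1/(1+\alpha)=1-1/(\kappa+2)$. Dividing the noise sum by $\Gamma_K$ collapses it into a geometric progression with common ratio $r=(1+\alpha)\rho$, strictly less than one by the hypothesis $\rho<1-1/(\kappa+2)$. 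Using the algebraic identity $1-(1+\alpha)\rho=((\kappa+2)(1-\rho)-1)/(\kappa+1)$ and summing the series explicitly yields the denominator $(\kappa+2)(1-\rho)-1$ in the stated bound.

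For $\mathbb{E}[\cA_0]$, I would complete the square in the strongly concave objective defining $\cA_0$ to obtain $\cA_0 \leq \|F(y_0)+\bar w_{0,N_0}\|^2/(2\mu)$. Lipschitzianity of $F$ from Assumption~\ref{assump-F} gives $\|F(y_0)\|^2 \leq 2L^2\|y_0-x^*\|^2+2\|F(x^*)\|^2$, while the $\mu$-strong convexity of the gap function $g$ gives $\|y_0-x^*\|^2 \leq 2g(y_0)/\mu$. Combining, $\mathbb{E}[\cA_0]$ is of order $\kappa^2 g(y_0)$, producing the $g(y_0)\kappa^2$ coefficient in $C$. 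Assembling these pieces and multiplying through by $q^K=1/\Gamma_K$ recovers $\tfrac{\mu}{2}\mathbb{E}[\|\bar y_K-x^*\|^2]\leq Cq^K$ in the claimed form.

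For Part~(ii), inverting $\bar{C}q^K\leq\epsilon$ yields $K\geq \log(\bar{C}/\epsilon)/\log(1/q)$; the elementary estimate $\log(1/q)=-\log(1-1/(\kappa+2))\geq 1/(\kappa+2)$ gives $K=\mathcal{O}(\kappa\log(\bar{C}/\epsilon))$ projection steps. For oracle evaluations, $\sum_{k=0}^K N_k \leq \rho^{-K}/(1-\rho)$; substituting $\rho=q^\beta$ together with $q^{-K}=\bar{C}/\epsilon$ produces $\rho^{-K}=(\bar{C}/\epsilon)^\beta = \mathcal{O}((1/\epsilon)^\beta)$. The main technical obstacle I anticipate will be isolating $\mathbb{E}[\cA_0]$ cleanly so that its coefficient scales as exactly $\kappa^2 g(y_0)$ rather than a cruder higher power, and tracking the geometric-series constants carefully enough that the noise coefficient lands on the stated $(\kappa+1)/((\kappa+2)(1-\rho)-1)$ form.
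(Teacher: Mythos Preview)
Your route from $\cA_K$ down to $\tfrac{\mu\Gamma_K}{2}\mathbb{E}[\|\bar y_K-x^*\|^2]$ --- plugging $x=x^*$ into the max, invoking strong monotonicity together with the VI condition, then Jensen --- is correct and in fact more direct than the paper's argument, which detours through Lemma~\ref{bound by delta} and the gap function (i.e., $\tfrac{\mu}{2}\|\bar y_K-x^*\|^2\le g(\bar y_K)\le \tilde\cA_K/\Gamma_K$ followed by $\mathbb{E}[\tilde\cA_K]\le \mathbb{E}[\cA_K]$ via $\max\mathbb{E}\le\mathbb{E}\max$). The telescoping of Lemma~\ref{A_{k+1}<A_k}, the geometric-series bookkeeping for the noise term, and Part~(ii) are likewise in line with the paper.

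The genuine gap is in your bound on $\mathbb{E}[\cA_0]$. Completing the square over all of $\Real^n$ yields $\cA_0\le \tfrac{1}{2\mu}\|F(y_0)+\bar w_{0,N_0}\|^2$, and your subsequent estimate $\|F(y_0)\|^2\le 2L^2\|y_0-x^*\|^2+2\|F(x^*)\|^2$ leaves an uncontrolled $\|F(x^*)\|^2$ term. In a \emph{constrained} VI, $F(x^*)$ need not vanish (optimality only says $-F(x^*)\in\Nscr_X(x^*)$), so $\|F(x^*)\|^2$ is not bounded by any multiple of $g(y_0)$, and the claim ``$\mathbb{E}[\cA_0]$ is of order $\kappa^2 g(y_0)$'' does not follow. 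This is not merely a matter of a cruder power, as you anticipated; it is a term that does not collapse at all.

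The paper's fix is to keep the constraint $x\in X$ rather than complete the square unconstrained: write $\langle F(y_0),y_0-x\rangle=\langle F(x^*),x^*-x\rangle+\langle F(x^*),y_0-x^*\rangle+\langle F(y_0)-F(x^*),y_0-x\rangle$, use the VI optimality $\langle F(x^*),x^*-x\rangle\le 0$ on $X$ to discard the first piece, bound the third via Young's inequality against the quadratic penalty (producing $\tfrac{1}{2\mu}\|F(y_0)-F(x^*)\|^2\le \tfrac{L^2}{2\mu}\|y_0-x^*\|^2$), and finally control the remaining $\langle F(x^*),y_0-x^*\rangle$ through the very definition of $g(y_0)$. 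Substituting this argument for your completion-of-square step, the rest of your outline goes through and lands on the stated constant $\kappa^2 g(y_0)$.
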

\begin{proof}
(i) From the strong convexity of $g$, we have that ${\mu\over 2}\|\bar y_k-x^*\|^2\leq g(\bar y)$. Note that from Lemma \ref{A_{k+1}<A_k}, the following holds for all $k$ by taking unconditional expectations.
\begin{align}
\nonumber&\mathbb E[\cA_{k+1}\mid \mathcal F_{k}\cup \mathcal F'_{k-1}]\leq \cA _k+{\gamma_{k+1}}\left({1\over c}+{1\over \mu}\right)(\tfrac{\nu^2}{N_{k+1}}+\tfrac{\tilde \nu^2}{N_{k}})\implies \mathbb E[\cA_{k+1}]\leq \us{\mathbb{E}[\cA _k]}+{\gamma_{k+1}}\left({1\over c}+{1\over \mu}\right)(\tfrac{\nu^2}{N_{k+1}}+\tfrac{\tilde \nu^2}{N_{k}})\\
& \implies \mathbb E[\cA_{K}]\leq \cA _0+\sum_{k=0}^{K-1}{\gamma_{k+1}}\left({1\over c}+{1\over \mu}\right)(\tfrac{\nu^2}{N_{k+1}}+\tfrac{\tilde \nu^2}{N_{k}}).  \label{bd-A}
\end{align}
Since $\Gamma_0=\gamma_0=1$, we have that $\Gamma_{k+1}=\Gamma_k+\gamma_{k+1}=\Gamma_k+{\mu\over \mu+L}\Gamma_k=\left(1+{1\over 1+\kappa}\right)\Gamma_k$. Therefore, using Lemma \ref{bound by delta} and the fact that $\max \mathbb E[.]\leq \mathbb E[\max(.)]$, the following holds for $\bar y_k={1\over \Gamma_k}\sum_{i=0}^k \gamma_iy_i$. 
\begin{align}\label{bound1}\mathbb E[g(\bar y_K)]\leq {\mathbb E[\tilde \cA_K]\over \Gamma_K}\leq {\mathbb E[\cA_K]\over \Gamma_K}\us{\overset{\eqref{bd-A}}{\leq}} \left(\cA_0+\sum_{k=0}^{K-1}{\gamma_{k+1}}\left({1\over c}+{1\over \mu}\right)(\nu^2/N_{k+1}+\tilde \nu^2/N_{k})\right)\left(1-{1\over \kappa+2}\right)^K,\end{align}
where $\tfrac{1}{\Gamma_K} = \left(\tfrac{\kappa+1}{\kappa+2}\right)^{K} = (1-\tfrac{1}{\kappa+2})^{K}$. Using the definition of $\cA_0$, we obtain:
\begin{align*}
\cA_0&=\max_{x\in X}\left\{\langle F(y_0)+F(x^*)-F(x^*),y_0-x\rangle-{1\over 2} \mu \|x-y_0\|^2\right\}\\
& = \max_{x \in x} \underbrace{\langle F(x^*),x^*-x\rangle}_{\tiny \ \leq \ 0 \mbox{ since } x^* \in \ \mbox{SOL}(X,F) } + \langle F(x^*), y_0-x^*\rangle + \max_{x\in X}\left\{\langle F(y_0)-F(x^*),y_0-x\rangle-{1\over 2} \mu \|x-y_0\|^2\right\} \\
	& \leq  
F(x^*), y_0-x^*\rangle +\underbrace{\max_{x\in X}\left\{\langle F(y_0)-F(x^*),y_0-x\rangle-{1\over 2} \mu \|x-y_0\|^2\right\}}_{\tiny \mbox{Term (C)} }.
\end{align*}
{Since $a^Tb\leq {1\over 2\mu}\|a\|^2+{\mu\over 2}\|b\|^2$ for $a, b \in \Real^n$, $\mbox{Term (C)}\leq \max_{x\in X}\Big\{{1\over 2\mu}\|F(y_0)-F(x^*)\|^2+{\mu\over 2}\|y_0-x\|^2-{\mu \over 2}\|x-y_0\|^2\big\}={1\over 2\mu}\|F(y_0)-F(x^*)\|^2$}.
By Lipschitz continuity of $F$ and the
definition of $g$, we have that  \begin{align}\label{bound2}
\cA_0\nonumber&\leq \langle F(x^*),y_0-x^*\rangle+{L^2\over 2 \mu}\|y_0-x^*\|^2\leq g(y_0)-{\mu\over 2}\|x^*-y_0\|^2+{L^2\over 2\mu}\|y_0-x^*\|^2\\&
{ \ = \ } g(y_0)+\|y_0-x^*\|^2{\mu\over 2}\left({L^2\over \mu^2}-1\right)\leq {g(y_0) + g(y_0)\left({L^2\over \mu^2}-1\right)}  =  {L^2\over \mu^2} g(y_0),
\end{align}
where the {penultimate} inequality {follows from the} strong convexity of $g(y_0)$ and  $g(x^*)=0$. Furthermore, using the  definition of $\gamma_k$, we have the following bounds.
\begin{align}\label{bound3}
\sum_{k=0}^{K-1}{\gamma_{k+1}}\nonumber&\left({1\over c}+{1\over \mu}\right)(\tfrac{\nu^2}{N_{k+1}}+\tfrac{\tilde \nu^2}{N_{k}})={\kappa}\left({1\over c}+{1\over \mu}\right)\sum_{k=0}^{K-1}\left(1+{1\over \kappa+1}\right)^k(\tfrac{\nu^2}{N_{k+1}}+\tfrac{\tilde \nu^2}{N_{k}})\\ \nonumber&\leq {\kappa}\left({1\over c}+{1\over \mu}\right)\sum_{k=0}^{K-1}\left(1+{1\over \kappa+1}\right)^k\left(\nu^2+\tilde \nu^2\over N_{k}\right)\\ \nonumber&\leq {2\kappa (\nu^2+\tilde \nu^2)}\left({1\over c}+{1\over \mu}\right)\sum_{k=0}^{K-1} {\rho^k\over \left(1-{1\over \kappa+2}\right)^k}\leq {2\kappa (\nu^2+\tilde \nu^2)}\left({1\over c}+{1\over \mu}\right){1\over 1-{\rho\over 1-{1\over \kappa+2}}}\\&={2\kappa (\nu^2+\tilde \nu^2)}\left({1\over c}+{1\over \mu}\right)\left({\kappa+1\over (\kappa+2)(1-\rho)-1}\right).
\end{align}
By substituting \eqref{bound2} and \eqref{bound3} in \eqref{bound1} and by recalling that ${\mu\over 2}\|\bar y_K-x^*\|^2\leq g(\bar y_K)$, the result follows.

\noindent (ii) To obtain $\epsilon$-solution we need $\bar C
q^K\leq \epsilon$, where $\bar C=2C/\mu$. Therefore, we need at
least $K \geq \log_{1/q}\bar C/\epsilon={\log \tilde
C/\epsilon\over \log 1/q}$ steps. \uvs{From the definition of
$q$, we have that $\log (1/q)=\log (1+{1\over
\kappa+1})\geq\tfrac{1}{2\kappa+2}$ by recalling that
$\log(1+x) \geq \tfrac{x}{2}$ for $x \in [0,1]$.} 
\vvs{Consequently, the iteration complexity is $\mathcal O(\kappa \log (\bar{C}/\epsilon))$ where $\bar{C}  \leq {\mathcal O}( \kappa^2(\nu^2+\tilde \nu^2))$.} \uvs{Similarly, the} oracle complexity can be computed as follows for $\rho=q^{\beta}$ and $\beta>1$:
\begin{align*}
\sum_{k=0}^{\log_{1/q}\bar C/\epsilon+1} \lfloor \rho^{-k}\rfloor=\frac{\rho^{-\log_{1/q}\bar C/\epsilon-2}-1}{1/\rho-1}\leq \frac{\rho^{-\log_{1/q}\bar C/\epsilon-1}}{1-\rho}={\bar C^{\beta}\over \rho-\rho^2}(1/\epsilon)^\beta. 
\end{align*}
\end{proof}
\begin{remark}
Standard {variance-reduced projection schemes} for (SVIs) \vvs{are characterized by an iteration complexity bound of $\mathcal{O}(\kappa^2 \log(\tfrac{1}{\epsilon}))$ while Algorithm~\ref{rate svi} has a bound given by $\mathcal{O}(\kappa \log(\tfrac{1}{\epsilon}))$.}
\end{remark}
\section{PROXIMAL SCHEMES FOR MONOTONE  STOCHASTIC PROBLEMS}\label{SSP sec.} 
{Monotone stochastic variational inequality problems have been
examined extensively in the literature. In monotone
regimes, the best known result via variance reduction was provided by~\shortciteN{iusem2017extragradient} where it a
suitably defined residual function provably diminishes at
{$\mathcal{O}(1/k)$} when a stochastic extragradient scheme is
employed together with a variance reduction framework. We develop a new avenue for resolving SVIs through a
stochastic proximal point framework.} We begin by summarizing
proximal point schemes for generalized equations in
Section~\ref{summ_prox} and then extend and analyze  \vvs{a stochastic generalization} in Section~\ref{svi_prox}.
\subsection{A Review of Proximal Point Schemes}\label{summ_prox}
Consider the solution of a generalized equation $0 \in T(u),$
where $T: \Real^n \rightarrow \Real^n$ is a set-valued maximal monotone operator.  
{A map $T$ is said to be $\mu$-strongly monotone if there exists a $\mu > 0$ such that 
$ (v-u)^T(y-x) \geq \af{\mu} \|y-x\|^2, \forall x,y, $
where $v \in T(y)$ and $u \in T(x)$. Furthermore, if $\mu = 0$, this reduces to mere monotonicity.  Recall that the graph of a monotone map is defined as 
$ \mbox{gph}(T) \triangleq \left\{(x,y) \mid y \in T(x)\right\}. $
Then $T$ is a {\em maximal monotone} operator if no monotone map $\Psi$ exists such that $\mbox{gph}(T) \subset \mbox{gph}(\Psi).$ For instance, the subdifferential map $\partial f$ of a closed, convex, and proper function is a maximal monotone map.}   
The classical proximal point algorithm can be traced back to the seminal paper by~\citeN{rockafellar1976monotone}; given a vector $u_0 \in \Real^n$, \eqref{PPA} generates a sequence $\{u_k\}$ defined as follows:
\begin{align} \label{PPA}
0 \in T(u_{k+1}) + \frac{1}{\lambda}(u_{k+1}-u_k). 
\end{align} 
Consequently,  the resolvent operator of a monotone set-valued operator, denoted by $J_{\lambda}^T(u)$, is defined as $J_{\lambda}^T \triangleq (I + \lambda T)^{-1}.$ 
Then we may state the update rule \eqref{PPA} as an update using the resolvent operator, i.e.
\begin{align*} \left[ 0 = T(u_{k+1}) + {1\over \lambda} (u_{k+1}-u_k) \right] \iff \left[u_{k+1} = J^T_{\lambda}(u_k)\right]. \end{align*} 
Consider the generalized proximal point method, rooted in the seminal work by~\citeN{rockafellar1976monotone}, which employs the following update rule (see~\citeN{corman2014generalized}):
\begin{align}\label{GPPA}
u_{k+1} \triangleq \eta J_{\lambda}^T(u_k) + (1-\eta) u_k, 
\end{align}
which reduces to \eqref{PPA} when $\eta = 1$. We now define the Yosida approximation operator $T_{\lambda}$ as 
$T_{\lambda} \triangleq \tfrac{(I- J_{\lambda}^T)}{\lambda}.$ 
It has been shown that if $T$ is a monotone set-valued map and $J_{\lambda}^T$ denotes its resolvent, then $T_{\lambda}$ is $\lambda$-firmly non-expansive and ${1\over \lambda}$-Lipschitz continuous. Moreover, we have that from~\citeN[Prop.~2.4]{corman2014generalized} that for all $\lambda > 0$, $0 \in T(u) \Leftrightarrow T_{\lambda}(u) = 0. $
\us{In other words, computing a zero of a generalized equation is equivalent to computing a root of a suitably defined nonlinear equation.} By definition of $T_{\lambda}(x)$, we note that \eqref{GPPA} can be recast as follows:
\begin{align*}
u_{k+1} & = \eta J_{\lambda}^T(u_k) + (1-\eta) u_k= u_k - \eta (u_k - J_{\lambda}^T(u_k)) = u_k - \eta \lambda T_{\lambda}(u_k).
\end{align*}
\us{In other words, the update is seen to be similar to a subgradient scheme, where the subgradient of $f$ is replaced by the $T_{\lambda}(u_k)$ while the steplength is given by $\eta\lambda$.}
We now reproduce the main rate statements available for \eqref{GPPA} in both monotone and strongly monotone regimes (see~\cite{corman2014generalized}).
\begin{align}
	\|T_{\lambda}(u_k)\|^2 & \leq \mathcal{O}\left(\tfrac{1}{k+1} \right), \qquad \qquad k \geq 0  \tag{(Monotone), $\gamma \in (0,2)$} \\
 \|u_k - u^*\|^2 & \leq \rho^k \|u_0 - u^*\|^2, \qquad k \geq 0, \quad \rho \triangleq \left| 1- \tfrac{\gamma \lambda \alpha}{1+\lambda \alpha}\right|.    \tag{St.~Monotone), $\gamma \in (0,2)$} 
\end{align}
\subsection{A Stochastic Proximal Point Framework}\label{svi_prox}
Faced with an expectation-valued map $T(u) \triangleq
\mathbb{E}[T(u,\omega)]$, a direct application of 
\eqref{PPA} is impossible since it requires exactly evaluating
the resolvent $J_{\lambda}^T$ of an expectation-valued map.
Consequently, contending with such problems requires  
inexactly evaluating the resolvent $J_{\lambda}^T$; in effect, we
compute a noise-afflicted variant of $T_{\lambda}(u_k)$, given by $T_{\lambda}(u_k)+w_k$, where
 $ \mathbb{E}[\|w_k\|^2] \leq
\tfrac{\nu^2}{N_k}, $ utilizing update rule~\eqref{st-GPPA-T}. 
\begin{align}\label{st-GPPA-T}
u_{k+1} = u_k - \eta \lambda (T_{\lambda}(u_k)+w_k).
\end{align}
Our goal lies in inexactly computing $J_{\lambda}^T(u)$ when $T(u)$
corresponds to the generalized equation reformulation of a variational
inequality problem. Specifically, $x$ is a solution to \eqref{VI$(X,F)$} if and only
if  \begin{align*} 0 \in T(x) \triangleq F(x) + \Nscr_X(x), \end{align*} where
$\Nscr_X(x)$ refers to the normal cone of $X$ at $x$.  is defined as $T(u)
\triangleq F(u) + \Nscr_X(u)$, where $\Nscr_X(u)$ is normal cone of $X$. Recall
that when $F$ is a single-valued monotone map, $F(x) +\Nscr_X(x)$ is maximal
monotone (see~\cite{facchinei2007finite}), allowing for applying the proximal
point framework. Hence, 
\begin{align*}
z^*_k=J^T_\lambda(u_k) \iff 0 \in (F + \tfrac{1}{\lambda} {\bf I})(z^*_k) - {\tfrac{1}{\lambda} }u_k   + \Nscr_{X}(z^*_k)  \iff z^*_k \in \mbox{SOL}(X,F+\tfrac{1}{\lambda} {\bf I} -  {\tfrac{1}{\lambda}}u_k),
\end{align*}
\us{where SOL$(X,H + u)$ denotes the solution set of variational inequality
problem with set $X$ and mapping $F(x)+u$.} It can be immediately seen that if
$F$ is a monotone map, then $F+\tfrac{1}{\lambda} {\bf I}$ is a
$\tfrac{1}{\lambda}-$strongly monotone map. In this section, when $F$ is a monotone expectation-valued map, we develop a proximal framework of \eqref{VI}  \vvs{reliant on} solving a sequence of
strongly monotone stochastic variational inequality problems,
each of which is solved with increasing exactness. \uvs{Solving
\eqref{VI}  is equivalent to
resolving} $0\in T(x) \triangleq \mathbb{E}[F(x,\omega)] + \Nscr_X(x)$. \us{We employ a} stochastic
proximal point framework described in Section \ref{svi_prox}, formalized in 
Algorithm \ref{SPP}. We proceed to show that the scheme admits a convergence
rate of $\mathcal O(1/k)$ where at each step of
\eqref{st-GPPA-T}, we obtain an increasingly accurate solution of $J_\lambda^T(u_k)$ by running
Algorithm \ref{SVI alg.} for a finite (but increasing with $k$ number of iterations denoted by $\ell_k$.\\

  \begin{algorithm} [{ Proximal-point Algorithm with Variable Sample-sizes (PPAWSS)}]
\label{SPP}

\em
\begin{enumerate}
\item[]
\item[(2.0)] Given $u_0\in X$, $\lambda>0$ and let $\eta\in (0,2)$, $k = 0$; 
\item[(2.1)] Run {\bf VS-Ave($F+1/\lambda I-1/\lambda u_k,y_0,1/\lambda,L+1/\lambda,\ell_k$)};
\item[(2.2)] $z_{k}=\bar y_{\ell_k}$;
\item[(2.3)] $u_{k+1}=\eta z_k+(1-\eta)u_k$;
\item[(2.4)] If $k > K$, then stop; else $k = k+1$; return
to (1); 
\item[(2.5)] Return $u_K$.
\end{enumerate}
\end{algorithm}
In the following theorem, we state the convergence result for Algorithm \ref{SPP}. 
\begin{theorem}[{Convergence rate of Stochastic Proximal Point Scheme (PPAWSS)}]
Suppose $\{u_k\}$ is a sequence generated by Algorithm \ref{SPP}, $T$ is defined as $T(x) = \mathbb{E}[F(x,\omega)]+\mathcal{N}_X(x)$, and $T_\lambda$ denotes the Yosida approximation operator. Suppose Assumptions \ref{assump-F} and \ref{assump_error} hold. Let $\ell_k=\uvs{\lfloor 2\log_{1/q}((1+k)^\alpha)\rfloor}$ for any $\alpha>1$ and $e_k={1\over \lambda}(J_\lambda(x_k)-z_k)$. 

\noindent (i) For any $\lambda>0$ and $K>0$, the following holds:
\begin{align*}\us{\mathbb{E}[\|T_\lambda(u_K)\|^2]}\leq {\tilde C\over \eta(2-\eta)\lambda^2(K+1)}+{2C\eta\hat C\over (2-\eta)\lambda(K+1)}=\mathcal O(1/(K+1)),\end{align*}
where $C$ is defined in Theorem \ref{rate svi}, $\hat C\triangleq2+{1\over 2\alpha-1}+{1\over 2(2\alpha-1)(\alpha-1)}$, and 
\begin{align*}\tilde C\triangleq\|u_0-u^*\|^2+\af{6}C\eta^2\lambda\left(1+{1\over 2\alpha-1}\right)+2\eta \|u_0-u^*\|\sqrt{2C\lambda}\left(1+{1\over \alpha-1}\right)+\af{4}C\eta^2\lambda\left(1+{1\over \alpha-1}\right)^2.\end{align*}

\noindent (ii) \vvs{Suppose an $\epsilon$-solution $u_k$ is
defined as $\mathbb{E}[\|T(u_k)\|^2] \leq \epsilon$. Then
computing an $\epsilon$-solution requires solving $\mathcal
O\left(\tfrac{1}{\epsilon} \log
\left(\tfrac{1}{\epsilon}\right)\right)$ \vvs{proximal}
problems and  $\mathcal O(1/\epsilon)^{1+2\alpha\beta}$ evaluations of the map}. 

\end{theorem}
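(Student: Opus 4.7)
My plan is to follow the classical inexact proximal-point analysis, tracking how errors from the inner \textbf{VS-Ave} solver propagate into the outer Yosida-based iteration. The starting observation is that, writing $e_k \triangleq \tfrac{1}{\lambda}(J_\lambda^T(u_k) - z_k)$, the update in step (2.3) can be recast as
$$u_{k+1} = u_k - \eta\lambda\bigl(T_\lambda(u_k) + e_k\bigr),$$
which exactly matches the noise-corrupted Yosida iteration \eqref{st-GPPA-T}. Thus the first step is to expand $\|u_{k+1}-u^*\|^2$ and invoke the $\lambda$-firm non-expansiveness of $T_\lambda$ (recalled in Section~\ref{summ_prox}), which yields $\langle T_\lambda(u_k), u_k-u^*\rangle \geq \lambda \|T_\lambda(u_k)\|^2$ since $T_\lambda(u^*)=0$. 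After a Cauchy--Schwarz split on the cross term $\langle e_k, u_k-u^*\rangle$ and bounding $\|T_\lambda(u_k)+e_k\|^2 \leq 2\|T_\lambda(u_k)\|^2 + 2\|e_k\|^2$, a standard rearrangement delivers the one-step recursion
$$\|u_{k+1}-u^*\|^2 \leq \|u_k-u^*\|^2 - \eta(2-\eta)\lambda^2\|T_\lambda(u_k)\|^2 + 2\eta\lambda\|e_k\|\,\|u_k-u^*\| + 2\eta^2\lambda^2\|e_k\|^2.$$

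The second step is to quantify the inner error. Since step (2.1) solves a $(1/\lambda)$-strongly monotone SVI with Lipschitz constant $L+1/\lambda$ by running \textbf{VS-Ave} for $\ell_k$ iterations, Theorem~\ref{rate svi}(i) applied with $\mu=1/\lambda$ gives $\mathbb E[\|z_k-J_\lambda^T(u_k)\|^2] \leq 2C\lambda\, q^{\ell_k}$, hence $\mathbb E[\|e_k\|^2] \leq (2C/\lambda)\, q^{\ell_k}$. The choice $\ell_k = \lfloor 2\log_{1/q}((1+k)^\alpha)\rfloor$ then yields $\mathbb E[\|e_k\|^2] \leq (2C/\lambda)(1+k)^{-2\alpha}$, a summable sequence precisely because $\alpha>1$. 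Taking unconditional expectations in the recursion and telescoping from $k=0$ to $K$, the two error series $\sum \mathbb E[\|e_k\|^2]$ and $\sum \mathbb E[\|e_k\|\|u_k-u^*\|]$ are controlled respectively by $\sum (1+k)^{-2\alpha}$ and $\sum (1+k)^{-\alpha}$, both convergent, which also implies that $\mathbb E[\|u_k-u^*\|]$ stays uniformly bounded (enabling a uniform bound on the cross term). Collecting the geometric and $p$-series constants produces the constants $\hat C$ and $\tilde C$ displayed in the statement.

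The third step converts the telescoped bound $\eta(2-\eta)\lambda^2\sum_{k=0}^K \mathbb E[\|T_\lambda(u_k)\|^2] \leq \tilde C + 2C\eta\lambda\hat C$ into the pointwise rate on $\mathbb E[\|T_\lambda(u_K)\|^2]$. The natural bridge is the well-known monotonicity of $\|T_\lambda(u_k)\|$ along the exact Yosida iteration (a standard fact from the proximal-point literature, e.g.\ Corman--Luke), which passes through to the inexact setting up to the summable error terms already controlled. Dividing the telescoped bound by $K+1$ then yields the claimed $\mathcal O(1/(K+1))$ rate with the stated constants. For part (ii), setting $Cq^K \sim \epsilon$ (so $K = \mathcal O(1/\epsilon)$ outer proximal problems) and summing inner iterations gives $\sum_{k=0}^K \ell_k = \mathcal O(K\log K) = \mathcal O((1/\epsilon)\log(1/\epsilon))$; the oracle complexity follows from Theorem~\ref{rate svi}(ii) which charges $\mathcal O(1/\epsilon_k)^\beta$ samples per inner call with $\epsilon_k \sim (1+k)^{-2\alpha}$, giving $\sum_k (1+k)^{2\alpha\beta} = \mathcal O(K^{1+2\alpha\beta}) = \mathcal O(1/\epsilon)^{1+2\alpha\beta}$.

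The main obstacle will be justifying the pointwise $\|T_\lambda(u_K)\|^2$ bound rather than a $\min_{k\leq K}$ or averaged bound: in the deterministic exact case monotone decrease of $\|T_\lambda(u_k)\|$ is classical, but here one must show that the summable inexactness in $e_k$ does not destroy this monotonicity, typically by combining the firm non-expansiveness with the summability of $\mathbb E[\|e_k\|]$. Verifying the exact form of the constants $\hat C$ and $\tilde C$ is then a bookkeeping exercise using $\sum (1+k)^{-2\alpha} \leq 1 + 1/(2\alpha-1)$ and analogous closed forms for the remaining tail sums.
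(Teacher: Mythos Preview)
Your proposal is essentially correct and follows the same route as the paper: recast the update as the inexact Yosida iteration \eqref{st-GPPA-T}, derive a one-step descent recursion via firm non-expansiveness of $T_\lambda$, control $\mathbb E[\|e_k\|^2]\leq (2C/\lambda)(1+k)^{-2\alpha}$ from Theorem~\ref{rate svi} and the choice of $\ell_k$, telescope, and then upgrade the averaged bound on $\sum_k\mathbb E[\|T_\lambda(u_k)\|^2]$ to a pointwise bound via the almost-monotonicity $\|T_\lambda(u_{k+1})\|^2\leq\|T_\lambda(u_k)\|^2+\tfrac{\eta}{2-\eta}\|e_k\|^2$ (which the paper proves directly from firm non-expansiveness and Young's inequality). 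Part~(ii) is handled identically.

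Two small points where your write-up diverges from the paper and would need tightening. First, the split $\|T_\lambda(u_k)+e_k\|^2\leq 2\|T_\lambda(u_k)\|^2+2\|e_k\|^2$ that you invoke yields a coefficient $-2\eta(1-\eta)$, not the $-\eta(2-\eta)$ you state; the former is the wrong sign for $\eta\in(1,2)$. The paper avoids this by introducing the exact step $\tilde u_{k+1}\triangleq u_k-\eta\lambda T_\lambda(u_k)$, proving $\|\tilde u_{k+1}-u^*\|^2\leq\|u_k-u^*\|^2-\eta(2-\eta)\lambda^2\|T_\lambda(u_k)\|^2$ cleanly, and then handling $\|u_{k+1}-\tilde u_{k+1}\|=\eta\lambda\|e_k\|$ separately. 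Second, your appeal to ``$\mathbb E[\|u_k-u^*\|]$ stays uniformly bounded'' to control $\mathbb E[\|e_k\|\,\|u_k-u^*\|]$ glosses over the correlation between $e_k$ and $u_k$; the paper instead establishes the pathwise bound $\|u_k-u^*\|\leq\|u_0-u^*\|+\eta\lambda\sum_{i<k}\|e_i\|$ and then uses the tower property on the resulting double sum $\sum_i\sum_{j<i}\mathbb E[\|e_i\|\,\|e_j\|]$, which is what produces the $(1+\tfrac{1}{\alpha-1})^2$ term in $\tilde C$. Both issues are easily repaired along the lines just described and do not affect the overall strategy.
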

\begin{proof}
(i) Let $\tilde u_{k+1}  \ \triangleq u_k-\eta\lambda T_\lambda(u_k)$. By invoking the definition of $J_\lambda^T$, we have that:
\begin{align}\label{use J}
\langle T_\lambda(u_1)-T_\lambda(u_2),u_1-u_2\rangle\nonumber&=\langle T_\lambda(u_1)-T_\lambda(u_2),\lambda T_\lambda(u_1)-\lambda T_\lambda(u_2)\rangle+\langle T_\lambda(u_1)-T_\lambda(u_2),J_\lambda^T(u_1)-J_\lambda^T(u_2)\rangle\\&=\lambda\|T_\lambda(u_1)-T_\lambda(u_2)\|^2+\langle T_\lambda(u_1)-T_\lambda(u_2),J_\lambda^T(u_1)-J_\lambda^T(u_2)\rangle.
\end{align}
Using the definition of $\tilde u_{k+1}$, \eqref{use J} and by recalling that $T_\lambda(u^*)=0$, we get
\begin{align} \notag
\|\tilde u_{k+1}-u^*\|^2 & =\|\us{u}_k-u^*-\eta\lambda T_\lambda(u_k)\|^2=\|u_k-u^*\|^2+\eta^2\lambda^2\|T_\lambda(u_k)\|^2-2\eta\lambda\langle T_\lambda(u_k),u_k-u^*\rangle \\
\notag & =\|u_k-u^*\|^2-\eta(2-\eta)\lambda^2\|T_\lambda(u_k)\|^2-2\eta\lambda\underbrace{\langle T_\lambda(u_k),J_\lambda(u_k)-J_\lambda(u^*)\rangle}_{\tiny {\geq 0, T_{\lambda}(u_k) \in T(J_{\lambda}(u_k)), \ T \mbox{is monotone}}} \\
\label{bound tilde u}
& \leq\|u_k-u^*\|^2-\eta(2-\eta)\lambda^2\|T_\lambda(u_k)\|^2 \\
\implies  & \|u_{k+1}-u^*\| \leq \|\tilde u_{k+1}-u^*\|+\overbrace{\|u_{k+1}-\tilde u_{k+1}\|}^{\tiny  \ \leq \|\eta\lambda e_k\|}\leq \|u_0-u^*\|+\eta\lambda \sum_{i=0}^k \|e_i\|.   
\label{bd-u}
\end{align}
Since $\|\tilde u_{k+1}-u^*\|\leq \|u_0-u^*\|+\eta\lambda \sum_{i=0}^k\|e_i\|$, we have the following. 
\begin{align} \label{bd-u-2}
\|u_{k+1}-u^*&\|^2{\leq} \|\tilde u_{k+1}-u^*\|^2+\|u_{k+1}-\tilde u_{k+1}\|^2+2\|\tilde u_{k+1}-u^*\|\|u_{k+1}-\tilde u_{k+1}\|\\ & \overset{\eqref{bound tilde u},\eqref{bd-u}}{\leq} \|u_k-u^*\|^2-\eta(2-\eta)\lambda^2\|T_\lambda(u_k)\|^2+\eta^2\lambda^2\|e_k\|^2+2\eta\lambda \|e_k\|\left(\|u_0-u^*\|+\eta\lambda \sum_{i=0}^k \|e_i\|\right). \notag
\end{align}
Summing  \eqref{bd-u-2} from $k=0$ to $K$ and taking expectations, we have the following.
\begin{align}
\nonumber&\eta(2-\eta)\lambda^2\sum_{k=0}^K\mathbb \us{[\|T_\lambda(u_k)\|]}\\ \nonumber&\leq \|u_0-u^*\|^2-\|u_{K+1}-u^*\|^2+\sum_{k=0}^K \left(\eta^2\lambda^2\|e_k\|^2+2\eta\lambda  \|e_k\|\left(\|u_0-u^*\|+\eta\lambda \sum_{i=0}^k \|e_i\|\right) \right)\\
\nonumber&\eta(2-\eta)\lambda^2\sum_{k=0}^K\mathbb E[\|T_\lambda(u_k)\|]\\ \nonumber&\leq \|u_0-u^*\|^2+ \eta^2\lambda^2\left(\sum_{k=0}^K\mathbb E\left[\|e_k\|^2\right]\right)+2\eta\lambda \|u_0-u^*\|\left(\sum_{k=0}^K \mathbb E\left[\|e_k\|\right]\right)+2\eta^2\lambda^2 \mathbb E\left[\left(\sum_{i=0}^K \sum_{j=0}^{i} \|e_j\|  \| \|e_i\|\right)\right] .
\end{align}
\vvs{If} $\hat{\mathcal F}_j \vvs{ \ \triangleq \ } \{u_0,\hdots,u_j\}$, then from the law of total expectation, we know that for all $j<i$, $\mathbb E\left[\|e_i\| \|e_j\|\right]=\mathbb E\left[\mathbb E\left[\|e_i\| \|e_j\|\mid \hat{\mathcal F_j}\right]\right]=\mathbb E\left[\|e_j\|\mathbb E\left[\|e_i\| \mid \hat{\mathcal F_j}\right]\right]$. Hence the following holds:
\begin{align}\label{bound T}
& \eta(2-\eta)\lambda^2\sum_{k=0}^K\mathbb E[\|T_\lambda(u_k)\|]\nonumber\leq \|u_0-u^*\|^2+ \eta^2\lambda^2\left(\sum_{k=0}^K\mathbb E[\|e_k\|^2]\right)+2\eta\lambda \|u_0-u^*\|\left(\sum_{k=0}^K\mathbb E[\|e_k\|]\right)\\&
+{{2\eta^2\lambda^2 \mathbb E\left[\sum_{k=0}^K \|e_k\|^2\right]}}+2\eta^2\lambda^2 \sum_{i=0}^K\sum_{j=0}^{i-1}\mathbb E\left[\|e_j\|\mathbb E\left[\|e_i\| \mid \hat{\mathcal F_j}\right]\right]\\& =\|u_0-u^*\|^2+ 3\eta^2\lambda^2\left(\sum_{k=0}^K\mathbb E[\|e_k\|^2]\right)+2\eta\lambda \|u_0-u^*\|\left(\sum_{k=0}^K\mathbb E[\|e_k\|]\right)
+{2\eta^2\lambda^2} \sum_{i=0}^K\sum_{j=0}^{i-1}\mathbb E\left[\|e_j\|\mathbb E\left[\|e_i\| \mid \hat{\mathcal F_j}\right]\right].\nonumber
\end{align}
From Theorem \ref{rate svi}, $\mathbb E[\|e_k\|^2]\leq {2C\over
\lambda} q^{\ell_k}  \leq
{2C\over\lambda}(k+1)^{-2\alpha}$, since $\ell_k \triangleq \lfloor 2\log_{1/q}((1+k)^\alpha)\rfloor$ for any
$\alpha>1$. Therefore, 
\begin{align}\label{bound int}
\sum_{k=0}^K {2C\over\lambda}(k+1)^{-2\alpha}\leq \tfrac{2C}{\lambda}\left(1+\int_0^K (x+1)^{-2\alpha}dx\right)=\tfrac{2C}{\lambda}\left(1+{(K+1)^{1-2\alpha}-1\over 1-2\alpha}\right)\leq \tfrac{2C}{\lambda}\left(1+{1\over 2\alpha-1}\right).
\end{align}
Now {by substituting} \eqref{bound int} in \eqref{bound T} and by Jensen's inequality, we obtain the following. 
\begin{align}\label{constant bound}
\eta(2-\eta)\lambda^2\sum_{k=0}^K\mathbb E[\|T_\lambda(u_k)\|]&\nonumber\leq \|u_0-u^*\|^2+6C\eta^2\lambda\left(1+{1\over 2\alpha-1}\right)+2\eta \|u_0-u^*\|\sqrt{2C\lambda}\left(1+{1\over 2\alpha-1}\right)\\&+4C\eta^2\lambda\left(1+{1\over 2\alpha-1}\right)^2 \triangleq \tilde C \implies  \eta(2-\eta)\lambda^2\sum_{k=0}^K\mathbb E[\|T_\lambda(u_k)\|]\leq \tilde C.
\end{align}
Since $\|T_\lambda(u_{k+1})\| = \|T_\lambda(u_{k+1})-T_\lambda(u_{k})+ T_\lambda(u_{k})\|$, the following holds:
\begin{align*}
  \ \|T_\lambda(u_{k+1})\|^2 & \uvs{ \ = \ } \|T_\lambda (u_k)\|^2+\|T_\lambda(u_{k+1})-T_\lambda(u_k)\|^2+2\left(T_\lambda(u_{k+1})-T_\lambda(u_k)\right)^TT_\lambda(u_k)\\&
\leq  \|T_\lambda (u_k)\|^2-{2-\eta\over \eta}\|T_\lambda(u_{k+1})-T_\lambda(u_k)\|^2+{2\over \eta}\|T_\lambda(u_{k+1})-T_\lambda(u_k)\| \|\lambda e_k\|\\&
\leq \|T_\lambda (u_k)\|^2-{2-\eta\over \eta}\|T_\lambda(u_{k+1})-T_\lambda(u_k)\|^2+{2-\eta\over \eta}\|T_\lambda(u_{k+1})-T_\lambda(u_k)\|^2+{\eta\over 2-\eta}\|e_k\|^2 \\
& \leq \|T_\lambda (u_k)\|^2+{\eta\over 2-\eta}\|e_k\|^2,
\end{align*}
where the first inequality arises from adding and subtracting $\tfrac{2}{\eta} \|T_{\lambda}(u_{k+1})-T_{\lambda}(u_k)\|^2$ and the Cauchy-Schwarz inequality \vvs{while} the \uvs{second} inequality \us{follows from Young's} inequality. Summing from $k=n$ to $K$, $\mathbb E[\|T_\lambda(u_K)\|^2]\leq \mathbb E[\|T_\lambda(u_n)\|^2]+{\eta\over 2-\eta}\sum_{k=n}^K\mathbb E[\|e_k\|^2]$. Using inequality \eqref{constant bound} we obtain the following
\begin{align*}\tilde C\geq \eta(2-\eta)\lambda^2 \sum_{k=0}^K\mathbb E[\|T_\lambda(u_k)\|]\geq \eta(2-\eta)\lambda^2\left((K+1)\mathbb E[\|T_\lambda(u_K)\|^2]-{\eta\over (2-\eta)}\sum_{k=0}^K\sum_{j=k}^{K-1}\mathbb E[\|e_j\|^2].\right),\\
\implies \mathbb E[\|T_\lambda (u_K)\|^2]\leq {\tilde C\over \eta(2-\eta)\lambda^2(K+1)}+{\eta\over (2-\eta)(K+1)}\sum_{k=0}^K\sum_{j=k}^{K-1}\mathbb E[\|e_j\|^2].\end{align*}
Using the fact that $\mathbb E[\|e_k\|^2\leq {2C\over\lambda}(k+1)^{-2\alpha}$, the following can be obtained:
\begin{align*}\mathbb E[\|T_\lambda (u_K)\|^2]\leq {\tilde C\over \eta(2-\eta)\lambda^2(K+1)}+{2C\eta\over (2-\eta)\lambda(K+1)}\sum_{k=0}^K\sum_{j=k}^{K-1}(j+1)^{-2\alpha}.\end{align*}
Note that  for any $\alpha>1$ we have that:
\begin{align*}\sum_{k=0}^K\sum_{j=k}^{K-1}(j+1)^{-2\alpha}&\leq \int_0^{K+1}\int_0^{K+1}(x+1)^{-2\alpha}dx dy\leq \sum_{k=0}^{K-1}\left((k+1)^{-2\alpha}+\int_k^{K-1}(x+1)^{-2\alpha}dx\right)\\&
=\sum_{k=1}^{K-1}(k+1)^{-2\alpha}+{1\over 2\alpha-1}\left((k+1)^{1-2\alpha}-K^{1-2\alpha}\right)\\&\leq 1+\int_0^{K-1} (x+1)^{-2\alpha}dx+{1\over 2\alpha-1}\left(1+\int_0^{K-1}(x+1)^{1-2\alpha}dx\right)
\\&\leq 2+{1\over 2\alpha-1}+{1\over 2(2\alpha-1)(\alpha-1)}
\implies \mathbb{E}[\|T_\lambda(u_K)\|^2] \leq \mathcal O(1/(K+1)).
\end{align*}

\noindent (ii) We showed that there exists a constant $\bar C$ where $\|T_\lambda(u_K)\|^2\leq \bar C/(K+1)$. Hence the minimum number of steps to obtain $\epsilon$-solution can be computed as follows for $\rho \triangleq q^\beta$ where $\beta>1$:
\begin{align*}
\sum_{k=0}^{\bar C/\epsilon}\ell_k \leq \sum_{k=0}^{\bar C/\epsilon} 2\log_{1/q}(1+k)^{\alpha}\leq \int_0^{\bar C/\epsilon+1}{2\alpha\over \log 1/q}\log(1+x)dx \uvs{\leq} {2\alpha\over \log 1/q}\left((\bar C/\epsilon+2)\log (\bar C/\epsilon+2)\right).
\end{align*}
The oracle complexity in terms of the number of evaluations of $G(x,\xi)$ can then be bounded as follows:
\begin{align*}
\sum_{k=0}^{\bar C/\epsilon+1}\sum_{s=0}^{\ell_k} \lfloor \rho^{-s}\rfloor&= \sum_{k=0}^{\bar C/\epsilon+1} \frac{(1/\rho)^{\ell_k+1}-1}{1/\rho-1}\leq \sum_{k=0}^{\bar C/\epsilon+1} \frac{(1+k)^{2\alpha\beta}}{1-\rho}\leq {\int_0^{\bar C/\epsilon+2}(1+x)^{2\alpha\beta}dx \over 1-\rho}
={(2+\bar C/\epsilon)^{1+2\alpha\beta}\over (1-\rho)(1+2\alpha\beta)}. 
\end{align*}
\end{proof}
\section{NUMERICAL RESULTS}
In this section, we compare our scheme with the extragradient scheme presented by \shortciteN{iusem2017extragradient} on  the following stochastic bimatrix game problem:
\begin{align}\label{example1}
\min_{x\in \Delta_n}\ \max_{y\in \Delta_m} \quad \af{\mathcal L}(x,y) \ \triangleq\  \mathbb E[\langle A(\xi)x,y\rangle],
\end{align} where $\Delta_n$ is an $n$-dimensional simplex. We may recast this saddle
point problem \eqref{example1} as an SVI to find $z=\uvs{(x,y)}\in \Delta_n\times
\Delta_m$ such that $\begin{bmatrix}A(\xi)^Ty\\-A(\xi)x \end{bmatrix}^T(z-\bar
z)\geq 0$, for all $\bar z\in \Delta_n\times \Delta_m$. Suppose the total
simulation budget is $1e7$, $n=20$, and $m=10$. \vvs{Table~\ref{tab1}} shows that for
different choices of Lipschitz constants (L), our scheme compares well with the
extragradient scheme proposed by \shortciteN{iusem2017extragradient}.  Note that we
computed the optimal solution \vvs{in} \eqref{example1} \vvs{by solving the sample-average problem using $1e6$ samples via \texttt{cvx}}. Steplength and sample sizes have been chosen as
suggested by \shortciteN{iusem2017extragradient}, $\eta<1/(\sqrt 6L)$, and $N_k=\lceil
\theta (k+\mu) \ln(k+\mu)^{1+b}\rceil$, \af{where $\theta=1$, $b=10^{-3}$ and $\mu=2+10^{-3}$.} In the 
({\bf PPAWSS}) scheme, we set $\eta=1$ and $N_k=\lfloor \rho^{-k}\rfloor$, \af{where $\rho=q^{-\beta}$, $q=1-(1/(\kappa+1))$ and $\beta=1.001$}.\begin{table}[htb]
\centering
\scriptsize
\caption{Comparing ({\bf PPAWSS}) scheme with extragradient method for SVI.\label{tab1}}
\begin{tabular}{|c||c|c||c|}
\hline
&&\multicolumn{2}{|c||}{$|\af{\mathcal L}(x_K,y_K)-\af{\mathcal L}(x^*,y^*)|$} \\\hline
Lipschitz constant (L)&$\lambda$&  PPAWSS&Extragradient\\ \hline\hline
7.05&3500&6.4576e-05&1.2697e-04\\ \hline
70.5&1200& 3.5218e-04 & 7.1742e-04\\\hline
705&40&2.5911e-03&6.0048e-03\\ \hline
\end{tabular}
\end{table}

It can be seen in Table \ref{tab1}, when $L$ increases, a smaller $\lambda$ tends to perform better since the subproblem in line (2) of ({\bf PPAWSS}) is $\tfrac{1}{\lambda}$-strongly monotone with condition number $\lambda(L+1/\lambda)$; therefore to manage the condition number,  we reduce $\lambda$. 
\section{CONCLUDING REMARKS} We developed two variable
sample-size methods to resolve strongly monotone and monotone
SVIs, \vvs{respectively}. In the former setting, a linear convergence rate and near
optimal oracle complexity \vvs{are} obtained with a more modest
dependence on the condition number for an averaging-based
variance-reduced scheme ({\bf VS-Ave}). In monotone regimes,  we
develop amongst the first \uvs{proximal point algorithms with variable sample-sizes ({\bf PPAWSS})} where \vvs{strongly monotone} subproblems are solved with increasing inexactness via
({\bf VS-Ave}), achieving the canonical deterministic rate.
Finally, numerics suggest that proposed scheme compares well
with its extragradient competitor. 

\footnotesize
\bibliographystyle{wsc}
\bibliography{demobib.bib}

\section*{AUTHOR BIOGRAPHIES}

\noindent {\bf AFROOZ JALILZADEH} is a Ph.D. candidate in the Harold and Inge Marcus Department of Industrial and Manufacturing Engineering at Pennsylvania State University. She received her bachelor degree in Mathematics and Applications from University of Tehran in Iran. Her research interests include stochastic optimization, variational inequality \vvs{problems}, convex optimization, and \vvs{m}achine learning. Her email address is \email{azj5286@psu.edu}. 
\\

\noindent {\bf UDAY V. SHANBHAG} is a professor  in the Harold and Inge Marcus Department of Industrial and Manufacturing Engineering at Pennsylvania State University. He holds a Ph.D. in Management science and Engineering from Stanford University (2006). His research interest lies in stochastic and nonlinear optimization, nonsmooth analysis and variational inequality problems. His email address is \email{udaybag@psu.edu}. 
\end{document}